\newcounter{Scounter}
\newtheorem{thm}{Theorem}
\newtheorem{Thm}{Theorem}
\newtheorem{lem}[thm]{Lemma}
\newtheorem{claim}{Claim}[section]
\numberwithin{equation}{section}
\newcommand{\problemtitle}[1]{\gdef\@problemtitle{#1}}
\newcommand{\probleminput}[1]{\gdef\@probleminput{#1}}
\newcommand{\problemquestion}[1]{\gdef\@problemquestion{#1}}
	\par\addvspace{.5\baselineskip}
	\par\addvspace{.5\baselineskip}
\newcommand{\qed}{{$\quad\square$\vs{3.6}}}
\newcommand{\vs}[1]{\vspace*{#1 mm}}
\def\C{{ \mathcal{C}}}
\def\S{{ \mathcal{S}}}
\def\Gfc{{ G_f^c}}
\def\bGHc{{ \bar{G}_H^c}}
\def\NP{{ \mathsf{NP}}}
\def\Poly{{ \mathsf{P}}}
\def\odd{{ \text{odd}}}
\definecolor{myorange}{HTML}{E69F00}
\definecolor{myblue}{HTML}{56B4E9}
\definecolor{mygreen}{HTML}{009E73}
\title{A necessary and sufficient condition for the existence\\ of a
properly coloured $f$-factor in an edge-coloured graph}
\author{
	Roman \v{C}ada$^1$\\ {\small \texttt{cadar@kma.zcu.cz}} \and\
	Michitaka Furuya$^2$\\ {\small \texttt{michitaka.furuya@gmail.com}}
	\and\ Kenji Kimura$^3$\\{\small \texttt{kimura@isenshu-u.ac.jp}}
	\and\
	Kenta Ozeki$^4$\\{\small \texttt{ozeki-kenta-xr@ynu.ac.jp}}
	\and
	Christopher Purcell$^1$\footnote{Corresponding author.}\\{\small \texttt{purcell@ntis.zcu.cz}} \and\
	Takamasa
	Yashima$^{1,5}$\\{\small \texttt{takamasa.yashima@gmail.com}}
	\vs{8}\\
	$^1$\textsl{\small European Centre of Excellence NTIS, Department of
	Mathematics,} \\ \textsl{\small University of West Bohemia,}\\
	\textsl{\small P.O. Box 314, 306 14 Pilsen, Czech Republic}\\
	$^2$\textsl{\small College of Liberal Arts and Sciences,} \\
	\textsl{\small Kitasato University,}\\
	\textsl{\small 1-15-1 Kitasato, Minami-ku, Sagamihara, Kanagawa 252-0373, Japan}\\
	$^3$\textsl{\small Department of Information Technology and Electronics,} \\
	\textsl{\small Ishinomaki Senshu University,}\\
	\textsl{\small 1 Shinmito, Minamizakai, Ishinomaki, Miyagi 986-8580, Japan}\\
	$^4$\textsl{\small Faculty of Environment and Information Sciences,} \\
	\textsl{\small Yokohama National University,}\\
	\textsl{\small 79-2 Tokiwadai, Hodogaya-ku, Yokohama, Kanagawa 240-8501, Japan}\\
	$^5$\textsl{\small Department of Computer and Information Science,} \\
	\textsl{\small Seikei Universtiy,}\\
	\textsl{\small 3-3-1 Kichijoji-Kitamachi, Musashino-shi, Tokyo, 180-8633, Japan}
}
\date{}
\begin{document}

\maketitle
\newpage
\begin{abstract}
	The main result of this paper is an edge-coloured version of Tutte's
	$f$-factor theorem. We give a necessary and sufficient condition for
	an edge-coloured graph $G^c$ to have a properly coloured $f$-factor.
	We state and prove our result in terms of an auxiliary graph $G_f^c$
	which has a 1-factor if and only if $G^c$ has a properly coloured
	$f$-factor; this is analogous to the ``short proof'' of the
	$f$-factor theorem given by Tutte in 1954. An alternative statement,
	analogous to the original $f$-factor theorem, is also given. We show
	that our theorem generalises the $f$-factor theorem; that is, the
	former implies the latter. We consider other properties of
	edge-coloured graphs, and show that similar results are unlikely
	for $f$-factors with rainbow components and distance-$d$-coloured
	$f$-factors, even when $d=2$ and the number of colours used is
	asymptotically minimal.
\end{abstract}

\section{Introduction}\label{sec:intro}

In order to characterise the graphs which have a 1-factor, Tutte
proved the following well-known theorem, in which $\odd(H)$ is
the number of components of a graph $H$ with an odd number of
vertices.

\begin{Thm}\label{thm:tutte1}
	\cite{Tutte1947} A graph $G$ has a 1-factor if and only if, for each
	subset $S$ of $V(G)$, $\odd(G-S)\leq |S|$.
\end{Thm}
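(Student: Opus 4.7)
The plan is to prove the two directions separately.

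Necessity is a direct counting argument: if $F$ is a 1-factor of $G$ and $S \subseteq V(G)$, then each odd component of $G-S$ must contain a vertex whose $F$-partner lies in $S$ (since $F$ cannot perfectly match an odd set of vertices internally), and distinct odd components yield distinct partners. Hence $\odd(G-S) \leq |S|$.

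For sufficiency, I would follow Lovász's classical extremal argument. First, observe that adding an edge to any graph $H$ can only decrease $\odd(H-S)$ for every $S$, so Tutte's condition is preserved under edge additions. Hence if a counterexample exists, there is an edge-maximal counterexample $G'$ on the same vertex set, characterised by the property that $G'+e$ has a 1-factor for every non-edge $e$. Let $U$ denote the set of vertices of $G'$ adjacent to every other vertex.

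The heart of the argument is the structural claim that $G'-U$ is a disjoint union of complete graphs. If not, pick $a, b, c \in V(G') \setminus U$ with $ab, bc \in E(G')$ and $ac \notin E(G')$, and pick $d$ non-adjacent to $b$ (which exists since $b \notin U$). Edge-maximality yields a 1-factor $M_1$ of $G'+ac$ containing $ac$ and a 1-factor $M_2$ of $G'+bd$ containing $bd$. Each component of $M_1 \triangle M_2$ is an even cycle alternating between $M_1$- and $M_2$-edges. If $ac$ and $bd$ lie in different such cycles, swapping along the cycle through $ac$ produces a 1-factor of $G'$; if they lie in the same cycle, one reroutes through the edges $ab$ and $bc$ to produce the same conclusion. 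Either way, we contradict the choice of $G'$. The same-cycle subcase is the main obstacle, as it requires carefully identifying the correct segment of the alternating cycle so that the resulting set of edges is a perfect matching lying in $E(G')$.

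Finally, Tutte's condition with $S = \emptyset$ implies $|V(G')|$ is even, and with $S = U$ gives $\odd(G'-U) \leq |U|$. Combined with the clique structure of $G'-U$, one constructs a 1-factor of $G'$ by matching one vertex from each odd clique to a distinct vertex of $U$, perfectly matching the remaining vertices within each clique of $G'-U$, and pairing up the leftover vertices of $U$ among themselves. This contradicts the choice of $G'$ as a counterexample, completing the proof.
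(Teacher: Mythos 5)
The paper does not prove this statement at all: Theorem~\ref{thm:tutte1} is Tutte's 1947 theorem, cited as a known result and used as a black box (both to establish necessity in Theorem~\ref{thm:main} and to extract a violating set in the sufficiency proof). So there is nothing in the paper to compare your argument against; it stands or falls on its own. On its own terms it is correct: the necessity count is right (each odd component of $G-S$ must send at least one $F$-edge into $S$, and these edges have distinct endpoints in $S$), and the sufficiency argument is the standard L\'ov\'asz extremal proof --- Tutte's condition is preserved under edge addition, an edge-maximal counterexample $G'$ has the property that $G'-U$ is a disjoint union of cliques (else the two auxiliary 1-factors $M_1$, $M_2$ yield a 1-factor of $G'$ via an alternating-cycle swap), and the clique structure plus $\odd(G'-U)\leq|U|$ and $|V(G')|$ even lets one assemble a 1-factor directly. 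Two small points of precision you should tighten if writing this out in full: in the same-cycle subcase one uses exactly one of the edges $ab$, $cb$ (chosen according to which of $a$, $c$ is met first when traversing the cycle from $b$ away from $d$), not both, together with the $M_2$-edges on one arc and the $M_1$-edges on the other; and in the final assembly one should note explicitly that $|U|-\odd(G'-U)$ is even (which follows from $|V(G')|$ being even and each even component contributing evenly), so the leftover vertices of $U$, which induce a clique, can indeed be paired off.
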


If we want to characterise the graphs which have an
$f$-factor\footnote{If $G$ is a graph and $f$ is a function
$f:V(G)\rightarrow\mathbb{N}$, an $f$-factor of $G$ is a subgraph $F$
that spans $G$ such that $d_F(v)=f(v)$ for each $v$ in $V(G)$.
Throughout the paper, we include $0$ in $\mathbb{N}$.}, we
can use Theorem~\ref{thm:tutte1} in the following way. For any graph
$G$, we may construct the graph $G_f$ by replacing each vertex $u$ in
$V(G)$ with a complete bipartite graph on $S_u\cup T_u$ where
$S_u=\{u_v\mid uv\in E(G)\}$ and $T_u=\{u'_i\mid 1\leq i \leq
d_G(u){-}f(u)\}$, with an edge in $G_f$ between $u_v$ and $v_u$ whenever
$uv$ is an edge in $G$ (unless $f(v)>d_G(v)$ for some vertex $v$ of $G$,
in which case, $G_f$ does not exist and $G$ can have no $f$-factor). It
is easy to check that $G$ has an $f$-factor if and only if $G_f$
(exists and) has a 1-factor. So we could simply apply
Theorem~\ref{thm:tutte1} to $G_f$ to decide whether $G$ has an
$f$-factor. However, Tutte's $f$-factor theorem, or, more accurately,
its proof as given in \cite{Tutte1954}, says that we need not
check the condition for every subset of $V(G_f)$. For a pair
of disjoint sets $S,T\subseteq V(G)$, let $X_{S,T}= (\bigcup_{u\in S} S_u)
\cup (\bigcup_{u \in T} T_u)$. Tutte's $f$-factor theorem can be stated
as follows.

\begin{Thm}\label{thm:tuttef}
	\cite{Tutte1952} (but see \cite{Tutte1954}) A graph $G$ has an
	$f$-factor if and only if, for each pair of disjoint subsets $S,T$
	of $V(G)$,
	$\odd(G_f-X_{S,T})\leq |X_{S,T}|$.
\end{Thm}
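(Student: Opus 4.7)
The strategy is to derive Theorem~\ref{thm:tuttef} from Theorem~\ref{thm:tutte1} applied to the auxiliary graph $G_f$. By the construction recalled in the excerpt, $G$ has an $f$-factor if and only if $G_f$ has a $1$-factor, so Theorem~\ref{thm:tutte1} yields: $G$ has an $f$-factor if and only if $\odd(G_f - W) \leq |W|$ for every $W \subseteq V(G_f)$. The necessity direction of Theorem~\ref{thm:tuttef} is therefore immediate, since every $X_{S,T}$ is a particular such subset $W$.

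For sufficiency, assume $\odd(G_f - X_{S,T}) \leq |X_{S,T}|$ for every pair of disjoint subsets $S, T \subseteq V(G)$, and let $W \subseteq V(G_f)$ be arbitrary. Set $\delta(W) := \odd(G_f - W) - |W|$; the goal is to show $\delta(W) \leq 0$. The plan is a local shifting argument: for each $u \in V(G)$, write $A_u = W \cap S_u$ and $B_u = W \cap T_u$, and show that one can modify $W$ within $S_u \cup T_u$ so that afterwards $(A_u, B_u) \in \{(\emptyset, \emptyset), (S_u, \emptyset), (\emptyset, T_u)\}$ without decreasing $\delta(W)$. Iterating over all $u$ produces a set $W' = X_{S,T}$ with disjoint $S, T$ satisfying $\delta(W') \geq \delta(W)$, so the hypothesis gives $\delta(W) \leq \delta(W') \leq 0$, as required.

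The local modifications exploit the fact that vertices of $T_u$ have no neighbours outside the blob $S_u \cup T_u$, while each vertex of $S_u$ has exactly one external neighbour, namely $v_u \in S_v$ for the corresponding edge $uv \in E(G)$. Several configurations are easy: if $A_u = S_u$ then every vertex of $T_u \setminus B_u$ is isolated in $G_f - W$, and a direct count shows that replacing $B_u$ by $\emptyset$ increases $\delta(W)$ by $2|B_u|$; in particular the configuration $A_u = S_u, B_u = T_u$, which would force $u$ into both $S$ and $T$, can always be avoided. Similarly, if $A_u = \emptyset$ then modifying $B_u$ only affects the size and parity of the single component of $G_f - W$ meeting the blob, and one checks that shrinking $B_u$ to $\emptyset$ never decreases $\delta(W)$.

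The main obstacle I anticipate is the case where $\emptyset \neq A_u \neq S_u$: removing or adding a vertex of $S_u$ to $W$ can merge or split several components of $G_f - W$ through the external edges to adjacent $S_v$'s, so the parity bookkeeping is genuinely global. The resolution is to show, by pairing up the effects of deleting individual vertices of $A_u$ in a discrete-convexity style argument, that at least one of the two extreme replacements $A_u \to \emptyset$ or $A_u \to S_u$ does not decrease $\delta$. Once this single step is justified, iterated application over all $u \in V(G)$ produces a set of the form $X_{S,T}$ with disjoint $S,T$ and completes the reduction to the hypothesis.
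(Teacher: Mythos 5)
Your overall strategy --- take an arbitrary violating set $W$ of $G_f$ and normalise it blob by blob, never decreasing $\delta(W)=\odd(G_f{-}W)-|W|$, until it has the form $X_{S,T}$ --- is the right one, and it parallels the normalisation the paper carries out (via extremal violating sets) in its proof of Theorem~\ref{thm:main}; note that the paper does not prove Theorem~\ref{thm:tuttef} directly but cites Tutte, and later re-derives it from Theorem~\ref{thm:main} through Lemma~\ref{lem:pc}. However, as written your proposal has a genuine gap: the case $\emptyset\neq A_u\subsetneq S_u$ is exactly where the combinatorial content of the theorem lives, and ``a discrete-convexity style argument pairing up the effects of deleting individual vertices'' is a hope, not a proof; nothing in the proposal establishes that one of the two extreme replacements works. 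There is also a false sub-claim: when $A_u=\emptyset$ and $B_u=T_u$, the remnant $S_u$ is an independent set inside the blob whose vertices may lie in many different components of $G_f{-}W$, so shrinking $B_u$ to $\emptyset$ can merge up to $|S_u|$ components while returning only $|T_u|=|S_u|-f(u)$ vertices, and $\delta$ can drop. You are saved only because $(\emptyset,T_u)$ is already a target configuration ($u\in T$) and must not be modified, but the proposal should say so explicitly.

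The good news is that the hard case has a clean one-sided resolution, so your proposed dichotomy is unnecessary: whenever $A_u\neq S_u$, the single replacement $A_u\to\emptyset$ never decreases $\delta$. The reason is the structural fact you record but do not exploit, namely that each vertex of $S_u$ has \emph{exactly one} neighbour outside $S_u\cup T_u$. Restoring the $|A_u|$ deleted vertices therefore merges the component $C$ containing the (connected, nonempty) blob remnant with at most $|A_u|$ further components, one per restored vertex, and changes nothing else. If $k$ of the merged components are odd, then $k\le |A_u|+1$ and the new component has size congruent to $k+|A_u|$ modulo $2$; so $\odd$ drops by at most $|A_u|$ (the extremal case $k=|A_u|+1$ forces the new component to be odd), while $|W|$ drops by exactly $|A_u|$, whence $\delta$ does not decrease. (When $T_u\setminus B_u=\emptyset$ the restored vertices attach only to their single external neighbours and the same bound holds trivially.) Afterwards one empties $B_u$ when $B_u\subsetneq T_u$ and leaves $B_u=T_u$ alone, and empties $B_u$ when $A_u=S_u$ at a gain of $2|B_u|$. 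This is precisely the point at which the coloured version breaks down --- in $G_f^c$ a vertex $u_i$ may have many external neighbours, so restoring it can merge more than one extra component per unit of $|W|$ --- which is why the paper must retain the partial intersections recorded by $T^A$ and $W^A$ and why Theorem~\ref{thm:main} needs palette systems at all. Spelling out the computation above, and fixing the $B_u=T_u$ sub-case, turns your sketch into a complete proof.
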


The proof of the $f$-factor theorem given in \cite{Tutte1954} is
essentially a proof of the above theorem followed by a proof that it
is equivalent to the original formulation. In this way, Tutte reduced
$f$-factors to 1-factors, which is not
easy to infer directly from the original (and most common) version of
the theorem given below; here, $h(S,T)$ denotes the number of
components $C$ of $G{-}(S\cup T)$ such that $\sum_{v\in V(C)}
f(v)+|E_G(C,T)|$ is
odd ($E_G(X,Y)$ is the set of edges with one endpoint in $X$ and the
other in $Y$).

\begin{Thm}\label{thm:tuttefb}
	\cite{Tutte1952} A graph $G$ has an $f$-factor if and only if, for
	each pair of disjoint subsets $S,T$ of $V(G)$,
	\[
		\sum_{u\in S} f(u) - \sum_{u\in T} (f(u)+d_S(u)) \geq h(S,T).
	\]
\end{Thm}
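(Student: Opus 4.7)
The plan is to deduce Theorem~\ref{thm:tuttefb} from Theorem~\ref{thm:tuttef}. Fix disjoint subsets $S, T \subseteq V(G)$. The idea is to show that $\odd(G_f - X_{S,T}) \leq |X_{S,T}|$, applied to this particular pair, rearranges into the inequality of Theorem~\ref{thm:tuttefb}. Both $|X_{S,T}|$ and $\odd(G_f - X_{S,T})$ will be computed explicitly in terms of $G$, $f$, $S$, $T$, and the quantity $h(S,T)$, after which the rest is algebra.

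The value $|X_{S,T}| = \sum_{u \in S} d_G(u) + \sum_{u \in T}(d_G(u) - f(u))$ is immediate from $|S_u| = d_G(u)$ and $|T_u| = d_G(u) - f(u)$.

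The central step is a careful structural analysis of $G_f - X_{S,T}$. The components split into four families: (i) for each $u \in S$, the $d_G(u) - f(u)$ vertices of $T_u$ become isolated; (ii) for each $u \in T$ and each $v \in N_G(u) \cap S$, the vertex $u_v$ is isolated, because its only neighbours in $G_f$ are $T_u$ (deleted as $u \in T$) and $v_u$ (deleted as $v \in S$); (iii) for each edge $uv$ with both endpoints in $T$, the pair $\{u_v, v_u\}$ forms a component of size $2$; and (iv) for each component $C$ of $G - (S \cup T)$, the bipartite blobs $S_u \cup T_u$ with $u \in V(C)$ coalesce through the cross-edges $u_v \sim v_u$ into a single connected piece, which also receives pendants $w_u$ for $u \in V(C)$ and $w \in T \cap N_G(u)$. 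A short parity count on the size of this fourth type of component shows it is odd precisely when $\sum_{u \in V(C)} f(u) + |E_G(V(C), T)|$ is odd, so the contribution of the fourth family to $\odd(G_f - X_{S,T})$ equals $h(S,T)$. Summing, $\odd(G_f - X_{S,T}) = \sum_{u \in S}(d_G(u) - f(u)) + |E_G(S,T)| + h(S,T)$.

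Substituting these two closed forms into the inequality of Theorem~\ref{thm:tuttef} and rearranging (using $|E_G(S,T)| = \sum_{u \in T} d_S(u)$) yields the inequality in Theorem~\ref{thm:tuttefb}. The delicate point is the analysis of family (iv): one must observe that a vertex $u \in T$ with neighbours in several distinct components of $G - (S \cup T)$ does not merge those components in $G_f - X_{S,T}$, because the internal bridge inside $S_u$ is destroyed when $T_u$ is deleted, so the various $u_v$ remain attached separately to the blobs of their respective components. Once this structural picture is nailed down, the remaining bookkeeping is routine.
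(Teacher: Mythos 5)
Your overall strategy---deducing Theorem~\ref{thm:tuttefb} from Theorem~\ref{thm:tuttef} by computing $|X_{S,T}|$ and $\odd(G_f-X_{S,T})$ in closed form---is the right one (it is essentially Tutte's own route to the equivalence of the two formulations), and families (i)--(iii) are analysed correctly. The gap is in family (iv): the claim that the blobs $S_u\cup T_u$ over $u\in V(C)$ coalesce into a single component needs $T_u\neq\emptyset$, i.e.\ $f(u)<d_G(u)$, for every $u\in V(C)$. If $f(u)=d_G(u)$ then $T_u=\emptyset$, the blob of $u$ is an independent set with no internal edges, each $u_v$ is joined to the rest of $Y_C$ only through its private external neighbour $v_u$, and $Y_C$ can fall apart into several components (moreover any $u_v$ with $v\in S$ becomes an extra isolated odd component). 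A minimal example: the path $a$--$u$--$b$ with $f(a)=f(b)=0$, $f(u)=2$, $S=T=\emptyset$; here $G_f$ is two disjoint paths on three vertices, so $\odd(G_f)=2$, while your formula predicts $h(\emptyset,\emptyset)=0$. Since the pieces of $Y_C$ have total size of the correct parity, the true $\odd(G_f-X_{S,T})$ can only exceed your count; this is harmless for the direction ``$f$-factor $\Rightarrow$ condition'', but it breaks ``condition for all $(S,T)$ $\Rightarrow$ $\odd(G_f-X_{S,T})\le|X_{S,T}|$ for all $(S,T)$''. The standard repair is to pass from $(S,T)$ to $(S,T')$ with $T'=T\cup\{u\notin S\cup T:\ f(u)=d_G(u)\}$: each added vertex has $T_u=\emptyset$, so $X_{S,T'}=X_{S,T}$, every surviving blob now has a nonempty $T_u$-side, your identity is valid for $(S,T')$, and applying the hypothesis to $(S,T')$ gives the required bound for $X_{S,T}$. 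This, rather than the point you flag (a vertex of $T$ adjacent to several components of $G-(S\cup T)$, which you handle correctly), is where the delicacy lies.

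A second issue: if you actually carry out the final rearrangement, you obtain $h(S,T)\le\sum_{u\in S}f(u)+\sum_{u\in T}\bigl(d_G(u)-f(u)-d_S(u)\bigr)=\sum_{u\in S}f(u)+\sum_{u\in T}\bigl(d_{G-S}(u)-f(u)\bigr)$, which is the classical form of Tutte's condition but is not literally the displayed inequality of Theorem~\ref{thm:tuttefb}: the two differ by $\sum_{u\in T}d_G(u)$. You assert that the algebra ``yields the inequality in Theorem~\ref{thm:tuttefb}'' without performing it; you should either perform it and reconcile the discrepancy (most plausibly the displayed statement intends $d_{G-S}(u)-f(u)$ in place of $-(f(u)+d_S(u))$) or at least state explicitly which form of the inequality your computation establishes.
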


Our main result is an analogue of the $f$-factor theorem for
properly colored $f$-factors in edge-coloured graphs. Recently,
properly coloured structures in edge-coloured graphs have received
much attention in the literature
\cite{Fujita2017,Fujita2011,Gutin2017,Kano2020,Lo2014a}.

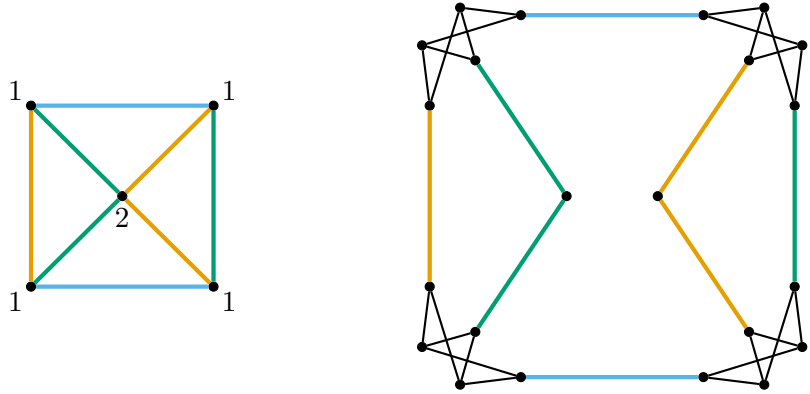
\begin{figure}[t]
	\begin{center}
		\def\e{1.2}
\def\f{0.6}
\def\g{0.7}
\def\h{0.2}
\hspace{1cm}
\begin{tikzpicture}

	\tikzstyle{every node} = [draw,circle,fill=black,inner
	sep=1.2]

	\node[color=white,fill=white] (dummy) at (0,-1.5*\e-\g) {};

	\node[label=below:$2$] (x0) at (0,0) {};
	\node[label=above left:$1$] (x1) at (-\e,\e) {};
	\node[label=above right:$1$] (x2) at (\e,\e) {};
	\node[label=below right:$1$] (x3) at (\e,-\e) {};
	\node[label=below left:$1$] (x4) at (-\e,-\e) {};

	\draw[ultra thick,color=myblue] 
		(x1) -- (x2)
		(x3) -- (x4);
	\draw[ultra thick,color=mygreen]
		(x1) -- (x0) -- (x4)
		(x2) -- (x3);
	\draw[ultra thick,color=myorange]
		(x2) -- (x0) -- (x3)
		(x1) -- (x4);
\end{tikzpicture}
\hspace{2cm}
\begin{tikzpicture}
	\tikzstyle{every node} = [draw,circle,fill=black,inner
	sep=1.2]

	\node (x0g) at (-\e/2,0) {};
	\node (x0r) at (\e/2,0) {};

	\node (x1g) at (-1.5*\e,1.5*\e) {};
	\node (x1b) at (-1.5*\e+\f,1.5*\e+\f) {};
	\node (x1r) at (-1.5*\e-\f,1.5*\e-\f) {};
	\node (x10) at (-1.5*\e-\h,1.5*\e+\g) {};
	\node (x11) at (-1.5*\e-\g,1.5*\e+\h) {};
	
	\foreach \from in {x10,x11}
	\foreach \to in {x1r,x1b,x1g}
	\draw[thick] (\from) -- (\to);

	\node (x2r) at (1.5*\e,1.5*\e) {};
	\node (x2g) at (1.5*\e+\f,1.5*\e-\f) {};
	\node (x2b) at (1.5*\e-\f,1.5*\e+\f) {};
	\node (x20) at (1.5*\e+\h,1.5*\e+\g) {};
	\node (x21) at (1.5*\e+\g,1.5*\e+\h) {};
	\foreach \from in {x20,x21}
	\foreach \to in {x2r,x2b,x2g}
	\draw[thick] (\from) -- (\to);

	\node (x3r) at (1.5*\e,-1.5*\e) {};
	\node (x3g) at (1.5*\e+\f,-1.5*\e+\f) {};
	\node (x3b) at (1.5*\e-\f,-1.5*\e-\f) {};
	\node (x30) at (1.5*\e+\h,-1.5*\e-\g) {};
	\node (x31) at (1.5*\e+\g,-1.5*\e-\h) {};
	\foreach \from in {x30,x31}
	\foreach \to in {x3r,x3b,x3g}
	\draw[thick] (\from) -- (\to);
	
	\node (x4g) at (-1.5*\e,-1.5*\e) {};
	\node (x4b) at (-1.5*\e+\f,-1.5*\e-\f) {};
	\node (x4r) at (-1.5*\e-\f,-1.5*\e+\f) {};
	\node (x40) at (-1.5*\e-\h,-1.5*\e-\g) {};
	\node (x41) at (-1.5*\e-\g,-1.5*\e-\h) {};
	\foreach \from in {x40,x41}
	\foreach \to in {x4r,x4b,x4g}
	\draw[thick] (\from) -- (\to);

	\draw[ultra thick,color=mygreen] (x1g) -- (x0g) -- (x4g);
	\draw[ultra thick,color=myorange] (x2r) -- (x0r) -- (x3r);
	\draw[ultra thick,color=mygreen] (x2g) -- (x3g);
	\draw[ultra thick,color=myorange] (x1r) -- (x4r);
	\draw[ultra thick,color=myblue] 
		(x1b) -- (x2b)
		(x3b) -- (x4b);
\end{tikzpicture}
		\caption{Left: an edge-coloured graph $G^c$ and a function $f$
			from $V(G)$ to $\mathbb{N}$;
		the label on $v$ denotes $f(v)$. Right: the graph $G_f^c$.}
		\label{fig:Gfc}
	\end{center}
\end{figure}

Let $G$ be a graph and let $c:E(G)\rightarrow \{1,\dots,k\}$ be a
colouring of its edges. We denote the resulting edge-coloured graph by
$G^c$. Our aim in this paper is to characterise the pairs $(G^c,f)$, where $f$ is a
function $f:V(G)\rightarrow \mathbb{N}$, such that $G^c$ has a {\em
properly coloured} $f$-factor; that is, one in which adjacent edges
have different colours. We call such a factor a {\em pc-$f$-factor}
for short. In order to decide whether $G^c$ has a pc-$f$-factor we
construct an auxiliary graph in the following way. For each vertex $u$
in $G^c$, we define $E^c(u)$ to be the set of colours assigned by $c$ to the set of edges
incident with $u$. The {\em colour degree} of $u$ is defined
to be $d^c(u)=|E^c(u)|$. We obtain the graph $G^c_f$ from $G^c$ by
replacing each vertex $v$ with a complete bipartite graph on
$S_u\cup T_u$, where $S_u=\{u_i\mid i\in
E^c(u)\}$ and $T_u=\{u'_i\mid 1\leq i \leq d^c(u)-f(u)\}$, with an
edge in $G^c_f$ between $u_j$ and $v_j$ whenever $uv$ is an edge with
$c(uv)=j$; see Figure~\ref{fig:Gfc} for an example of this
construction. It is easy to check that $G^c$ has a pc-$f$-factor if and
only if $\Gfc$ has a 1-factor. To decide whether $G^c$ has such a
factor, we could apply Theorem~\ref{thm:tutte1} to $G^c_f$, similarly
to the uncoloured version described above. Our main result is that we
need not check the condition for every subset, analogously to
Theorem~\ref{thm:tuttef}.

We now introduce some concepts that will be important throughout the
paper.
We define a {\em palette
system} $\S$ of an edge-coloured graph $G^c$ to be a triple
$(S,T,W)$ of disjoint subsets of
$V(G^c)$, where $T$ and $W$ are disjoint unions of subsets of $V(G^c)$ defined as follows. Let
$\hat{f}$ be the maximum value of $f$. Let $\C_0$ and $\C_1$ denote the following families of subsets of ${1,\dots,k}$:
\[
  \C_0= \bigcup_{0\leq i \leq \hat{f}-2} \binom{\{1,\dots,k\}}{i}; \hspace{0.5cm} 
\C_1= \bigcup_{1\leq i \leq \hat{f}-1} \binom{\{1,\dots,k\}}{i}.
\]
Let $\{T^A\subseteq V(G^c) \mid A\in \C_0\}$ be a set of disjoint subsets of $V(G^c)$ and let $T$ be their 
union. Let $\{W^A\subseteq V(G^c) \mid A\in \C_1\}$ be a set of disjoint subsets of $V(G^c)$ and let $W$ be their union. 
For each palette system $\S$, we
further define a set $X_\S$ of vertices of the derived graph $G^c_f$.
Let $X_S,X_T,$ and $X_W$ be defined as follows.
\begin{equation*}
	\begin{aligned}
		X_S &= \bigcup_{u\in S} S_u \\
		X_T &=\bigcup_{A\in \C_0}\bigcup_{u\in T^A}(T_u\cup \{u_i\mid i\in A\})\\
		X_W &=\bigcup_{A\in \C_1}\bigcup_{u \in W^A}\{u_i\mid i \in A\}
	\end{aligned}
\end{equation*}
We define $X_\S=X_S\cup X_T \cup X_W$. In Figure~\ref{fig:STW}, we
show how $X_\S$ intersects $S_v\cup T_v$ when $v$ is in $S$, $T$ and
$W$ respectively. We can now state our main result.

\begin{thm}\label{thm:main}

	Let $G^c$ be an edge-coloured graph and let $f$ be a function from $V(G)$ to 
	$\mathbb{N}$. Then there is a properly coloured
	$f$-factor in $G^c$ if and only if, for every palette system $\S$,
	we have $\odd(G^c_f-X_\S)\leq |X_\S|$.

\end{thm}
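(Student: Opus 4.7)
The plan is to derive Theorem~\ref{thm:main} from Tutte's 1-factor theorem (Theorem~\ref{thm:tutte1}) applied to the auxiliary graph $G_f^c$, together with a structural reduction that cuts down the family of subsets one must test. The forward direction is immediate: if $G^c$ has a pc-$f$-factor then $G_f^c$ has a 1-factor, so by Theorem~\ref{thm:tutte1} we have $\odd(G_f^c - X) \leq |X|$ for every $X \subseteq V(G_f^c)$, and specialising to $X = X_\S$ gives the result.

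For the reverse direction we argue the contrapositive. Assume $G^c$ has no pc-$f$-factor, so $G_f^c$ has no 1-factor. Theorem~\ref{thm:tutte1} produces a set $X \subseteq V(G_f^c)$ with $\odd(G_f^c - X) > |X|$; choose such an $X$ so as to maximise the deficiency $\odd(G_f^c - X) - |X|$ and, subject to this, to minimise $|X|$. We show that this extremal $X$ must coincide with $X_\S$ for some palette system $\S$, giving the desired contradiction. For each $v \in V(G)$, write $Y_v = X \cap (S_v \cup T_v)$, $p_v = |Y_v \cap S_v|$ and $q_v = |Y_v \cap T_v|$; the shapes of $Y_v$ arising from a palette system are exactly (a) $\emptyset$, (b) a nonempty subset of $S_v$ of size at most $\hat f - 1$, (c) $S_v$, or (d) $T_v$ together with a subset of $S_v$ of size at most $\hat f - 2$. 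The core of the argument is to rule out every other shape.

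If $0 < q_v < |T_v|$, then removing any vertex of $T_v \cap X$ from $X$ never decreases the deficiency: a $T_v$-vertex has neighbours only inside the gadget, so the modification either preserves the single complete-bipartite gadget-component (with its parity flipped) when $p_v < d^c(v)$, or creates a fresh isolated odd component when $p_v = d^c(v)$, while $|X|$ drops by one. The minimality of $|X|$ therefore forces $q_v \in \{0, |T_v|\}$. In the remaining ``over-full'' cases ($q_v = 0$ with $\hat f \leq p_v < d^c(v)$, and $q_v = |T_v|$ with $\hat f - 1 \leq p_v < d^c(v)$) we perform compensating moves on $S_v$; the key inequality is $|T_v| = d^c(v) - f(v) \geq d^c(v) - \hat f$, which is exactly what makes the $\hat f$-thresholds in $\C_0$ and $\C_1$ tight, and also explains the off-by-one between them (forcing $T_v \subseteq X$ already ``pays'' for one extra $S_v$-vertex through the gain of $|T_v|$ isolated odd components). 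After these modifications at every $v$, the shapes $Y_v$ partition the vertices of $G$ into the sets $S$, $T^A$, $W^A$ of a palette system $\S$ with $X = X_\S$.

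The main obstacle is controlling the external effects of the local modifications, because each $v_i \in S_v \setminus X$ has external edges of colour $i$ leading into the gadgets of neighbours of $v$, so moving such a vertex into or out of $X$ may merge or split components of $G_f^c - X$ far away from $v$. We handle this by sub-dividing the over-full cases according to how many external components each candidate $v_i$ bridges, and using extremality to discard the unfavourable configurations. The internal bipartite structure---which ensures that a single modification affects at most one gadget-component---together with the $\hat f$-bounds coming from $|T_v| \geq d^c(v) - \hat f$, then suffices to push the accounting through.
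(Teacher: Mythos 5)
Your overall strategy---apply Theorem~\ref{thm:tutte1} to $\Gfc$, take an extremal violating set $X$, and show that its trace on each gadget $S_v\cup T_v$ must be one of the palette shapes---is the same as the paper's, and your treatment of the $T_v$-side (a partially deleted $T_v$ can be shrunk because a restored $T_v$-vertex has neighbours only in $S_v$, so it either becomes isolated or merely flips the parity of the single gadget component) matches Claim~\ref{claim:XT}. The genuine gap is in how you propose to exclude the ``over-full'' $W$-shape, i.e.\ $X\cap T_v=\emptyset$ and $\hat f\le |X\cap S_v|<d^c(v)$. Your single extremality criterion (maximise $\odd(\Gfc-X)-|X|$, then minimise $|X|$) cannot do it. The only modification whose effect on the component count is controllable here is adding \emph{all} of $S_v\setminus X$ to $X$: this isolates the $|T_v|$ vertices of $T_v$ while destroying at most one odd component, and the resulting change in deficiency is bounded below by $|X\cap S_v|-f(v)-\eta$, which in the over-full range is only guaranteed to be $\ge 0$, while $|X|$ strictly \emph{increases}. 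A violating set of equal deficiency and larger cardinality does not contradict your extremal choice, so no contradiction is reached. This is exactly why the paper uses two opposite extremal choices in sequence: a minimum-cardinality violating set to force the $T$-shapes (Claim~\ref{claim:XT}), and then a maximum-cardinality violating set among those to force the $S$/$W$-shapes (Claim~\ref{claim:XSW}).

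Relatedly, your plan for the over-full cases---moving individual vertices $v_i$ of $S_v$ and case-analysing ``how many external components each candidate $v_i$ bridges''---is precisely the step that does not go through: restoring a single $v_i$ can merge an uncontrolled number of far-away components, and nothing in the extremality of $X$ bounds that loss. The paper never moves individual $S_v$-vertices; its only two moves are ``delete all of $T_v$ from $X$'' and ``add all of $S_v\setminus X$ to $X$'', whose effects on $\odd$ are controlled purely by the internal complete-bipartite structure of the gadget together with the parity bookkeeping via $\eta$ (which you gesture at as an ``off-by-one'' but do not carry out). Two smaller points: your list of shapes to exclude omits $S_v\cup T_v\subseteq X$ with $T_v\ne\emptyset$, which your criterion does handle (restoring one $T_v$-vertex creates an isolated odd component and raises the deficiency by $2$) but which must be said; and the bounds one actually proves are the $f(v)$-based ones ($|X\cap S_v|\le f(v)-2$ when $T_v\subseteq X$, and $\le f(v)-1$ otherwise), since $|T_v|=d^c(v)-f(v)$ is what enters the count---these then imply the $\hat f$-based membership in $\C_0$ and $\C_1$.
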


We remark that the necessity of the condition of Theorem~\ref{thm:main} follows immediately from
the 1-factor theorem: if $G^c$ has a pc-$f$-factor, then $G^c_f$ has a
1-factor by construction and so $\odd(G^c_f{-}X)\leq |X|$ for all $X\subseteq V(G^c_f)$ by Theorem~\ref{thm:tutte1}. We
prove the sufficiency of the condition in Section~\ref{sec:main}. We
continue this section by describing some additional computational
results, which we prove in Section~\ref{sec:comp}; we complete the
section with some important definitions.

\begin{figure}[t]
	\begin{center}
		\def\sc{0.9}
\begin{tikzpicture}[scale=\sc]
	
	\useasboundingbox (-1.5,-1) rectangle (1.5,2);
	\tikzstyle{every node} = [draw,circle,fill=black,inner
	sep=1.2]

	\fill[color=white,top color=black!30,bottom color=white]
		(-1.5,-1) .. controls (-3,1.2) and (3,1.2) .. (1.5,-1);

	\draw (0,0) ellipse (1.5 and 0.5);
	\draw (0,1.5) ellipse (1.2 and 0.4);

  \node[draw=none,fill=none] at (0,2.5) {$v \in S$};
  \node[draw=none,fill=none] at (-2.25,1.5) {$T_v$};
  \node[draw=none,fill=none] at (-2.25,0) {$S_v$};
  \node[draw=none,fill=none] at (0,0.85) {$+$};

	\node[color=myorange]  (vr) at (-1,0) {};
	\node[color=mygreen] 	 (vg) at (-0.5,0) {};
	\node[color=myblue] 	 (vb) at (0,0) {};
	\node[draw=none,fill=none] (tdots) at (0.75,0) {$\dots$};

	\node (v1) at (-0.6,1.5) {};
	\node (v2) at (-0.1,1.5) {};
	\node[draw=none,fill=none] (sdots) at (0.5,1.5) {$\dots$};

\end{tikzpicture}
\hspace{1cm}
\begin{tikzpicture}[scale=\sc]

	\useasboundingbox (-1.5,-1) rectangle (1.5,2);
	\tikzstyle{every node} = [draw,circle,fill=black,inner
	sep=1.2]

	\fill[color=white,top color=black!30,bottom color=white]
		(-1.5,-1) .. controls (-3.5,3.5) and (5,3) .. (-0.5,-1);

  \node[draw=none,fill=none] at (0,2.5) {$v \in T^{\{\textcolor{myorange}{1},\textcolor{mygreen}{2},\textcolor{myblue}{3}\}}$};
  \node[draw=none,fill=none] at (0,0.85) {$+$};

	\draw (0,0) ellipse (1.5 and 0.5);
	\draw (0,1.5) ellipse (1.2 and 0.4);

	\node[color=myorange] (vr) at (-1,0) {};
	\node[color=mygreen] 	(vg) at (-0.5,0) {};
	\node[color=myblue] 	(vb) at (0,0) {};
	\node[draw=none,fill=none] (tdots) at (0.75,0) {$\dots$};

	\node (v1) at (-0.6,1.5) {};
	\node (v2) at (-0.1,1.5) {};
	\node[draw=none,fill=none] (sdots) at (0.5,1.5) {$\dots$};

\end{tikzpicture}
\hspace{1cm}
\begin{tikzpicture}[scale=\sc]

	\useasboundingbox (-1.5,-1) rectangle (1.5,2);
	\tikzstyle{every node} = [draw,circle,fill=black,inner
	sep=1.2]

	\fill[color=white,top color=black!30,bottom color=white]
		(-1.5,-1) .. controls (-3,1.2) and (3,1.2) .. (-0.5,-1);

	\draw (0,0) ellipse (1.5 and 0.5);
	\draw (0,1.5) ellipse (1.2 and 0.4);

	\node[draw=none,fill=none] at (0,2.5) {$v \in
		W^{\{\textcolor{myorange}{1},\textcolor{mygreen}{2},\textcolor{myblue}{3}\}}$};
  \node[draw=none,fill=none] at (0,0.85) {$+$};

	\node[color=myorange]  (vr) at (-1,0) {};
	\node[color=mygreen] 	 (vg) at (-0.5,0) {};
	\node[color=myblue] 	 (vb) at (0,0) {};
	\node[draw=none,fill=none] (tdots) at (0.75,0) {$\dots$};

	\node (v1) at (-0.6,1.5) {};
	\node (v2) at (-0.1,1.5) {};
	\node[draw=none,fill=none] (sdots) at (0.5,1.5) {$\dots$};

\end{tikzpicture}
		\caption{In $G_f^c$, $X_\S \cap (S_v \cup T_v)$ depends on whether $v$
		is in $S$, $T$ or $W$. Here, $X_\S$ is depicted in grey.}
		\label{fig:STW}
	\end{center}
\end{figure}
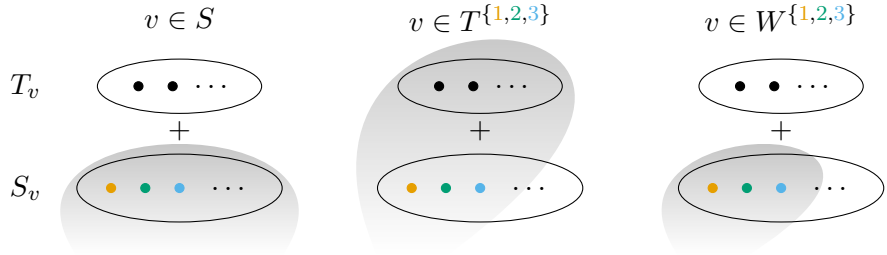

Given an edge-coloured graph $G^c$ and a function $f$ from $V(G^c)$ to
$\mathbb{N}$, we can construct $G_f^c$ efficiently; that is, the
number of computational steps required is bounded by a polynomial in
the number of bits required to represent $G^c$ and $f$ (approximately
$|E(G^c)|{+}|V(G^c)|\log \hat{f}$). Since a 1-factor in a graph can also
be computed efficiently (using any algorithm for maximum matching;
e.g., \cite{Edmonds1965}), we may deduce that finding a 
pc-$f$-factor in an edge-coloured graph, if one exists, can be
done in polynomial time. It is natural to ask whether $f$-factors with
other properties can be reduced to 1-factors efficiently. We say that
an edge-coloured graph is {\em rainbow} if every edge has a
unique colour. Rainbow coloured substructures in edge-coloured graphs
are widely studied; we direct the reader to the survey by Kano and
Xuelinang~\cite{Kano2008}. A {\em rainbow component $f$-factor} (or
rc-$f$-factor) in an edge-coloured graph $G^c$ is an $f$-factor for
which each component is rainbow. We show that deciding
whether an edge-coloured graph has an rc-$f$-factor is
$\mathsf{NP}$-complete, and therefore no efficient reduction from
rc-$f$-factors to 1-factors exists unless $\Poly=\NP$.

Being properly coloured is a local condition while being rainbow
coloured is global. It is natural to ask what role is played by
locality in the difference between pc-$f$-factors and rc-$f$-factors.
We introduce an intermediate case to answer this question. An
edge-coloured graph is {\em distance-$d$-coloured} if the distance
between every monochromatic pair of its edges is at least $d$;
distance-$1$-colouring coincides with proper edge colouring. A
distance-$d$-colouring of a graph $G$ is also a proper vertex
colouring of the $d$th power of the line graph of $G$. Proper vertex
and edge colourings are, of course, widely studied;
distance-$d$-colourings have been the subject of several
papers~\cite{Faudree1989,Ito2007,Molloy1997a}. We show that deciding
the existence of a distance-$d$-coloured $f$-factor in an
edge-coloured graph is $\mathsf{NP}$-complete even for $d=2$. We
strengthen this result by showing that the problem remains hard even
when the number of colours in the graph is asymptotically minimal.

\subsection{Notation and Terminology}

In this paper we consider only finite simple graphs. We follow the
definitions in \cite{Diestel2018} unless stated otherwise and refer the reader to
\cite{Akiyama2011} for a full treatment of factor theory. Here, we
recall some important concepts. To {\em delete} a set $X$ of vertices
from a graph $G$ is to remove them from $V(G)$ and remove all edges
incident with at least one vertex in $X$ from $E(G)$. The resulting
graph is denoted by $G{-}X$. If $X$ is a singleton $\{x\}$, we may
omit the brackets and write $G-x$; we adopt this convention also for
the union and difference of sets and write, say, $(X\setminus x) \cup
y$. If $X$ is a subset of $V(G)$, the subgraph of $G$ {\em induced} by $X$ is
the graph obtained by deleting $V(G)\setminus X$ from $G$.

\section{The Main Result}\label{sec:main}

We start this section by completing the proof of
Theorem~\ref{thm:main}; that is, we prove that the condition in the
theorem is sufficient for an edge-coloured graph to have a
pc-$f$-factor. We then show that our result implies Tutte's $f$-factor
theorem (Theorem~\ref{thm:tuttef}). Finally, we discuss an alternative
statement of the result, analogous to the original statement of the
$f$-factor theorem (Theorem~\ref{thm:tuttefb}).

\subsection{Sufficiency}

In order to show that the condition in Theorem~\ref{thm:main} is
sufficient, completing its proof, we suppose that there exists an
edge-coloured graph $G^c$ which has no pc-$f$-factor for some function
$f$ from $V(G)$ to $\mathbb{N}$. We will show that there exists a
palette system $\S=(S,T,W)$ such that $\odd(G^c_f {-} X_\S)>|X_\S|$,
where $X_\S=X_S\cup X_T \cup X_W$ as defined in Section~1. We know
that $G_f^c$ has no 1-factor by construction; by
Theorem~\ref{thm:tutte1}, there must exist a set of vertices $X$ in
$G_f^c$ such that $\odd(G^c_f{-}X)>|X|$. We say that a set with this
property is \emph{violating}. We want to show that there is a palette
system $\S$ such that $X_\S=X_S \cup X_T \cup X_W$ is violating. In
the following claims, we show that there is a violating set $X$ with
subsets corresponding to $X_S$, $X_T$ and $X_W$. The first claim deals
with the subset corresponding to $X_T$.

\begin{claim}\label{claim:XT}
	Let $v\in V(G^c)$. There exists a violating set $X$ of $\Gfc$ such
	that if $X\cap T_v \not=\emptyset$, then:
	\begin{enumerate}
		\item $T_v \subseteq X$;
		\item $|X\cap S_v| \leq f(v){-}2$.
	\end{enumerate}
\end{claim}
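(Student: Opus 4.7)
The plan is to take a minimum-cardinality violating set $X$ of $\Gfc$ and argue that either $X\cap T_v=\emptyset$ (in which case (1) and (2) hold vacuously) or $T_v\subseteq X$ with $X\cap S_v=\emptyset$; the latter gives (2) as soon as $f(v)\ge 2$, and for the boundary case $f(v)\in\{0,1\}$ I will instead produce a different violating set with empty $T_v$-intersection. The key local feature of $\Gfc$ near $v$ is that $S_v\cup T_v$ induces a complete bipartite subgraph: every $u'\in T_v$ has all neighbors in $S_v$, while every $v_i\in S_v$ has, besides its $T_v$-neighbors, exactly one ``external'' neighbor $u_i$ (the other endpoint of the colour-$i$ edge at $v$ in $G^c$). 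Consequently, restoring a single $T_v$-vertex to $\Gfc-X$ perturbs $\odd(\Gfc-X)$ by exactly $\pm 1$ whenever $T_v\setminus X$ is nonempty (because then $S_v\setminus X$ is contained in a single component of $\Gfc-X$), and restoring a single $S_v$-vertex perturbs $\odd(\Gfc-X)$ by $\pm 1$ whenever $T_v\subseteq X$ (since then its only possible neighbor outside $X$ is its unique external counterpart).

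For property (1), suppose by contradiction that the minimum violating $X$ contains some $u_i'\in T_v$ and misses some $u_j'\in T_v$, and let $X'=X\setminus\{u_i'\}$. By the structural observation $\odd(\Gfc-X')=\odd(\Gfc-X)\pm 1$, and combined with the standard parity estimate $\odd(\Gfc-X)\ge|X|+2$ this gives $\odd(\Gfc-X')\ge|X|+1>|X|-1=|X'|$, contradicting the minimality of $|X|$. Hence $T_v\subseteq X$ whenever $X\cap T_v\ne\emptyset$. Exactly the same exchange applied to a hypothetical $v_j\in X\cap S_v$, now using the second part of the structural observation, shows that $X\cap S_v=\emptyset$, yielding (2) whenever $f(v)\ge 2$.

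The main obstacle is the boundary case $f(v)\in\{0,1\}$, where $f(v)-2<0$ makes (2) unsatisfiable by any $X$ with $X\cap T_v\ne\emptyset$, so we must instead exhibit a violating set with empty $T_v$-intersection. Starting from the minimum violating $X$ above (which has $T_v\subseteq X$ and $X\cap S_v=\emptyset$), I would swap $T_v$ for $S_v$: define $X''=(X\setminus T_v)\cup S_v$. In $\Gfc-X''$ every vertex of $T_v$ becomes isolated and contributes an odd component, giving $\odd(\Gfc-X'')=\odd(H)+|T_v|$ where $H=\Gfc-(X\cup S_v)$. Comparing this with $\odd(\Gfc-X)=\odd(H+S_v)$ and using that each $v_i\in S_v$ has at most one external edge so $\odd(H+S_v)\le\odd(H)+|S_v|$, one deduces that $X''$ remains violating for $f(v)=0$; for $f(v)=1$ a final one-vertex adjustment inside $S_v$ suffices. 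Making the parity comparison go through cleanly for $f(v)=1$, particularly when many external neighbors $u_i$ happen to lie in $X$, is where I expect most of the technical work to lie.
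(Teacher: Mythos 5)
Your argument for part (1) is sound and essentially the paper's: remove a single $T_v$-vertex from a minimum violating set and observe that $\odd$ drops by at most $1$ while $|X|$ drops by $1$. The gap is in part (2), and it starts with a false structural premise: a vertex $v_i\in S_v$ does \emph{not} in general have a unique external neighbour. By construction $v_i$ is joined to $u_i$ for \emph{every} edge $uv$ with $c(uv)=i$, and a colour may appear on several edges at $v$ (indeed, if $c$ were proper the whole theorem would reduce to Tutte's). So when $T_v\subseteq X$ and you restore $v_j\in X\cap S_v$, the restored vertex can merge many components, each containing one of its external neighbours, and $\odd$ can drop by far more than $1$. Your exchange therefore does not show that $X\setminus\{v_j\}$ is still violating, and the conclusion $X\cap S_v=\emptyset$ is not just unproven but too strong: the claim's bound $|X\cap S_v|\le f(v)-2$ (and the sets $T^A$ with $A\neq\emptyset$ in the palette system) exist precisely because a minimum violating set can be forced to keep some $S_v$-vertices.

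The paper's route for (2) avoids touching $S_v$ altogether: assuming $T_v\subseteq X$ and $|X\cap S_v|\ge f(v)-1$, it removes all of $T_v$ from $X$ and bounds the drop in $\odd$ by $|S_v\setminus X|$ (all surviving $S_v$-vertices and $T_v$ merge into one component), improved to $|S_v\setminus X|-1$ in the boundary case $|X\cap S_v|=f(v)-1$ by a parity count of $|T_v|+|S_v\setminus X|$; comparing with the drop $|T_v|$ in $|X|$ contradicts minimality. This also disposes of your troublesome cases $f(v)\in\{0,1\}$ with no extra construction, since then $|X\cap S_v|\ge f(v)-1$ holds automatically and the same contradiction forces $X\cap T_v=\emptyset$. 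Your proposed swap $X''=(X\setminus T_v)\cup S_v$ for those cases both relies on the unavailable fact $X\cap S_v=\emptyset$ and, as you concede, does not close for $f(v)=1$.
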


\begin{proof}
	Let $X$ be a violating set of $\Gfc$ with minimum cardinality. We suppose
	that $X\cap T_v$ is not empty, and begin by proving (1). If $S_v\subseteq X$,
	then $\mbox{odd}(G_f^c-(X\setminus T_v))=\mbox{odd}(G_f^c-X)+|X\cap T_v|\ge|X|+1\ge|X\setminus T_v|+1$,
	and so $X\setminus T_v$ is violating, contradicting the minimality of $X$.
	We therefore assume that $S_v\setminus X$ is not empty.
	If
	$f(v)=d^c(v)$, then $T_v=\emptyset$, contradicting the definition of
	$X$; thus, $f(v)<d^c(v)$. Suppose that there exists a vertex $x\in T_v\setminus
	X$. Comparing the components of $G_f^c{-}X$ and $G_f^c{-}(X\setminus
	T_v)$, we see that they only differ at the component containing $x$.
	In particular, we observe that $\odd(G_f^c{-}X) -
	\odd(G_f^c{-}(X\setminus T_v)) \leq 1$. But then we have:
	\begin{align*}
		\odd(G_f^c{-}(X\setminus T_v))   &\geq \odd(\Gfc{-}X)-1\\
																		&\geq |X|\\
																		&=|X\cap T_v|+|X\setminus T_v|\\
																		&\geq |X\setminus T_v| +1,
	\end{align*}
	and therefore $X\setminus T_v$ is violating, contradicting the
	minimality of $X$. We conclude that (1) holds.

	Now suppose that $X\cap T_v\not=\emptyset$ and $|X\cap S_v|\geq
	f(v){-}1$; we will show that (2) holds. By (1), $T_v\subseteq X$. Note that $S_v$
	contributes at most $|S_v\setminus X|$ odd components to $\Gfc{-}X$.
	We observe that $\odd(\Gfc{-}X)$ and $\odd(\Gfc{-}(X\setminus T_v))$
	differ by at most $|S_v\setminus X|$; this maximum is attained when
	each vertex of $S_v\setminus X$ is in a separate odd component and
	$|T_v|+|S_v\setminus X|$ is even. If $|X\cap S_v|=f(v){-}1$,
	then $|T_v|+|S_v \setminus X| = (d^c(v){-}f(v))+(d^c(v){-}f(v){+}1)$ is odd, in
	which case $\odd(\Gfc{-}X)-\odd(\Gfc{-}(X\setminus T_v))\leq
	|S_v\setminus X|{-}1$. Let $\eta=1$ if $|X \cap S_v|=f(v){-}1$ and
	$\eta=0$ otherwise; in other words, we have $|X\cap S_v| \geq
	f(v){-}\eta$. We also have
	$\odd(\Gfc{-}X)-\odd(\Gfc{-}(X\setminus T_v) \leq |S_v \setminus
	X|{-}\eta$. But then we have:
	\begin{align*}
		\odd(\Gfc{-}(X\setminus T_v))
						&\geq \odd(\Gfc{-}X)-|S_v\setminus X|+\eta\\
						&\geq|X|+1-|S_v\setminus X|+\eta\\
						&=|X\setminus T_v|+|X\cap T_v|+1-|S_v\setminus X|+\eta\\
						&=|X\setminus T_v| +(d^c(v)-f(v))+1-(d^c(v)-|X\cap
						S_v|)+\eta\\
						&=|X\setminus T_v|-f(v)+1+|X\cap S_v|+\eta\\
						&\geq|X\setminus T_v|+1,
	\end{align*}
	which contradicts the minimality of $X$ as before; this completes
	the proof of (2) and the claim.
	\hfill\qed

\end{proof}

The next claim shows that a violating set $X$ satisifying the above
claim also has disjoint subsets corresponding to $X_S$ and $X_W$.

\begin{claim}\label{claim:XSW}
	Let $v \in V(G)$. There exists a violating set $X$ of $\Gfc$ which
	satisfies Claim~\ref{claim:XT}, such
	that if $X\cap T_v=\emptyset$ and $X\cap S_v\not=\emptyset$, then
	one of the following holds:
	\begin{enumerate}
		\item $S_v\subseteq X$;
		\item $1\le|X\cap S_v|\leq f(v)-1$.
	\end{enumerate}
\end{claim}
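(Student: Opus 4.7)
The plan is to take $X$ to be a violating set of $G_f^c$ of minimum cardinality; by (the proof of) Claim~\ref{claim:XT}, such an $X$ already satisfies the conclusion of that claim for every vertex, and in particular for $v$. It therefore suffices to show that this same $X$ also meets the conclusion of Claim~\ref{claim:XSW} at $v$. Suppose for contradiction that the hypothesis holds at $v$ (so $X\cap T_v=\emptyset$ and $X\cap S_v\ne\emptyset$) but neither (1) nor (2) does; setting $k:=|X\cap S_v|$, this forces $f(v)\le k \le d^c(v)-1$, so in particular $T_v\ne\emptyset$ and $S_v\setminus X\ne\emptyset$.

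The candidate improvement is $X':=X\setminus S_v$, of cardinality $|X|-k<|X|$. In $G_f^c-X$ the vertices of $T_v\cup(S_v\setminus X)$ span a complete bipartite subgraph and so lie in a single component~$C$. Passing from $G_f^c-X$ to $G_f^c-X'$ reinserts the $k$ vertices of $X\cap S_v$; each of them is adjacent to all of $T_v\subseteq C$, so all $k$ are absorbed into the component of~$C$, and each $v_i\in X\cap S_v$ may additionally pull in the component of its unique external neighbour (by construction of $G_f^c$, such a vertex has at most one edge outside $S_v\cup T_v$, namely the one coming from the colour-$i$ edge at $v$ in $G^c$). Hence $C$ merges with some $m\le k$ further components $D_1,\dots,D_m$ of $G_f^c-X$ to form a new big component $C'$ of $G_f^c-X'$, while every other component of $G_f^c-X$ is unaffected.

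The heart of the argument is the ensuing parity bookkeeping. Writing $a$ for the number of odd components among $\{C,D_1,\dots,D_m\}$, one has
\[
\odd(G_f^c-X')=\odd(G_f^c-X)-a+[|C'|\text{ odd}],
\]
together with $|C'|\equiv |C|+k+\sum_j|D_j|\equiv a+k\pmod{2}$. Because $a\le m+1\le k+1$, there are only two subcases: either $a\le k$, in which case the estimate is immediate, or $a=k+1$, in which case $a+k$ is odd and the indicator term contributes~$1$, giving the same conclusion $\odd(G_f^c-X')\ge \odd(G_f^c-X)-k\ge |X|+1-k=|X'|+1$. Thus $X'$ is itself violating, contradicting the minimality of~$X$. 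The main obstacle I anticipate is precisely this parity split: the extremal case $a=k+1$ is the one that threatens the inequality, and one must notice that the parity of $C'$ necessarily flips in exactly this case so as to save it; the remainder is a routine accounting of which components merge into $C$.
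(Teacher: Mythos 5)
There is a genuine gap, and it sits exactly where you flagged the ``routine accounting'': the assertion that each $v_i\in X\cap S_v$ has at most one neighbour outside $S_v\cup T_v$ is false. By construction, $v_i$ is joined to $w_i$ for \emph{every} edge $vw$ of $G$ with $c(vw)=i$, and a single colour class at $v$ may contain many edges --- this is precisely why $S_v$ is indexed by the colours in $E^c(v)$ rather than by the edges at $v$ (see the centre vertex in Figure~\ref{fig:Gfc}, whose green copy has two external neighbours). Consequently, reinserting the $k$ vertices of $X\cap S_v$ can absorb into $C$ an unbounded number of components $D_1,\dots,D_m$ with $m$ much larger than $k$; the quantity $a$ can then exceed $k+1$, and your estimate $\odd(\Gfc{-}X')\ge\odd(\Gfc{-}X)-k$ collapses. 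So $X'=X\setminus S_v$ need not be violating and no contradiction with minimality is obtained. Your bookkeeping is sound only when every colour class at $v$ consists of a single edge, i.e.\ essentially for proper edge-colourings, which is not the general case of the theorem.

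The underlying problem is the direction of the modification: un-deleting vertices of $S_v$ is uncontrollable because those vertices carry external edges. The paper goes the opposite way: it takes $X$ of \emph{maximum} cardinality among violating sets satisfying Claim~\ref{claim:XT} and considers $X'=X\cup S_v$. Deleting all of $S_v$ isolates every vertex of the nonempty set $T_v$ (whose neighbourhood is exactly $S_v$), creating $|T_v|$ new odd components while destroying at most the single component of $\Gfc{-}X$ that contained $S_v\setminus X$ and $T_v$; the external edges of $S_v$ are irrelevant to this count. A parity refinement in the boundary case $|X\cap S_v|=f(v)$ then yields $\odd(\Gfc{-}X')\ge|X'|+1$, contradicting maximality. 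If you want to salvage your write-up, switch to this additive modification and replace the minimality hypothesis by the appropriate maximality one, checking that $X\cup S_v$ still satisfies Claim~\ref{claim:XT} (it does, since it meets each $T_u$ in the same set as $X$ does).
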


\begin{proof}
	Let $X$ be a violating set satisfying Claim~\ref{claim:XT} and
	suppose $X$ is of maximum cardinality among such sets.
	Further suppose that $X\cap T_v$ is empty but $X\cap S_v$ is not. We
	assume that neither (1) nor (2) hold, and derive a contradiction.
	Since $|S_v|=d^c(v)$, if (1) does not hold then $1\leq|X\cap
	S_v|\leq d^c(v){-}1$. If $d^c(v)=f(v)$, (2) follows immediately,
	giving a contradiction. We may therefore assume that $d^c(v)>f(v)$,
	and thus $T_v$ is not empty. Since (1) does not hold, there exists a
	vertex $u$ in $S_v\setminus X$; note that every vertex in $T_v$ is a neighbour of $u$. Since
	(2) does not hold, $|X \cap S_v| \geq f(v)$. We now consider
	$X'=X\cup S_v$. Observe that each vertex of $T_v$ is isolated, and
	therefore an odd component, in $\Gfc{-} X'$. On the other hand, the
	vertices in $T_v$ are in the same component as $u$ in $\Gfc{-}X$. We
	conclude that $\odd(\Gfc{-}X')$ and $\odd(\Gfc{-}X)$ differ by at
	least $|T_v|{-}1$; this minimum is attained only if
	$|T_v|+|S_v\setminus X|$ is odd. If $|X \cap S_v|=f(v)$, then
	$|T_v|+|S_v \setminus X|=(d^c(v)-f(v))+(d^c(v)-f(v))$ is even, in
	which case $\odd(\Gfc{-}X')-\odd(\Gfc{-}X)\geq |T_v|$. Let $\eta=0$ if
	$|X \cap S_v|=f(v)$ and $\eta=1$ otherwise; in other words, we have
	that $|X \cap S_v| \geq f(v){+}\eta$. We also have that
	$\odd(\Gfc{-}X')-\odd(\Gfc{-}X)\geq |T_v|{-}\eta$. But then we have:
	\begin{align*}
		\odd(\Gfc{-}X')
						&\geq \odd(\Gfc{-}X)+|T_v|-\eta\\
						&\geq|X|+1+|T_v|-\eta\\
						&=|X|+1+|S_v|-f(v)-\eta\\
						&=|X'|+|X\cap S_v|+1-f(v)-\eta\\
						&\ge|X'|+1,
	\end{align*}
	which contradicts the maximality of $X$, completing the proof.
	\hfill\qed
\end{proof}

We can now complete the proof of Theorem~\ref{thm:main}.
Let $X$ be a violating set that satisfies Claims~\ref{claim:XT}
and~\ref{claim:XSW}. It is clear that $X$ is the disjoint union of
three sets corresponding respectively to Claim~\ref{claim:XT} and
the first and second parts of Claim~\ref{claim:XSW}; respectively
and suggestively, we call these sets $T,S$ and $W$. We define these
sets formally as follows:
\begin{align*}
	S   &=\{ u\in V(G^c)\mid S_u\subseteq X \} \\
	T^A   &=\{u\in V(G^c)\mid T_u\subseteq X, S_u\cap
	X=\{u_i\mid i\in A\}\} &(A\in \C_0)\\
	W^A &=\{u\in V(G^c)\mid T_u\cap
	X=\emptyset,S_u\cap X=\{u_i\mid i\in
A\}\} &(A\in \C_1)\\
	T   &=\bigcup_{A\in \C_0} T^A \\
	W   &=\bigcup_{A\in\C_1} W^A.
\end{align*}
Now, with
$X_S,X_T$ and $X_W$ defined as in Section~\ref{sec:intro}, and with
$X_\S=X_S\cup X_T \cup X_W$, it is easy to verify that $X=X_\S$ as
required.

\subsection{Relationship to Tutte's $f$-Factor Theorem}

We now show that our main result is stronger than the $f$-factor
theorem, in the sense that the latter may be deduced from the former.
For a graph $G$ and a function $f$ from $V(G)$ to $\mathbb{N}$, let
(C1) be the statement that, for all disjoint subsets $S,T$ of $V(G)$,
$\odd(G_f{-}X_{S,T}) \leq |X_{S,T}|$; i.e., (C1) is the condition in
Theorem~\ref{thm:tuttef}. For $G$ and $f$ as before and for a
colouring $c$ of the edges of $G$, let (C2) be the statement that, for
all palette systems $\S$ of $G$, $\odd(\Gfc{-}X_\S) \leq |X_\S|$; i.e.,
(C2) is the condition in Theorem~\ref{thm:main}. Thus,
Theorem~\ref{thm:tuttef} is the statement that $G$ has an $f$-factor
if and only if $G_f$ satisfies (C1); Theorem~\ref{thm:main} is the
statement that $G^c$ has a pc-$f$-factor if and only if $G_f^c$
satisfies (C2).
The following
lemma shows that the restriction of (C2) to proper edge-colourings is
equivalent to (C1).

\begin{lem}\label{lem:pc}
	
	Let $G$ be a graph, let $c$ be a proper edge-colouring of $G$ and
	let $f$ be a function from $V(G)$ to $\mathbb{N}$. Then $G_f$
	satisfies (C1) if and only if $G_f^c$ satisfies (C2).

\end{lem}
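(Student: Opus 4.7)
My plan is to obtain both directions of the biconditional at once by chaining three equivalences drawn from the two factor theorems, rather than trying to compare the families of test sets $\{X_{S,T}\}$ and $\{X_\S\}$ set-theoretically. The key observation is elementary: since $c$ is a proper edge-colouring of $G$, every subgraph of $G$ inherits a proper edge-colouring, so every $f$-factor of $G$ is automatically a pc-$f$-factor of $G^c$; conversely, forgetting colours turns any pc-$f$-factor of $G^c$ into an $f$-factor of $G$. Hence $G$ has an $f$-factor if and only if $G^c$ has a pc-$f$-factor.

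Concatenating this equivalence with the two factor theorems then completes the proof. Theorem~\ref{thm:tuttef} gives that $G_f$ satisfies (C1) if and only if $G$ has an $f$-factor, and Theorem~\ref{thm:main} gives that $G_f^c$ satisfies (C2) if and only if $G^c$ has a pc-$f$-factor; stringing the three biconditionals together yields (C1) $\iff$ (C2).

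The main ingredient is Theorem~\ref{thm:main}, which has already been proved above; no further combinatorial work is needed. I would deliberately avoid a direct set-theoretic argument because both directions carry subtleties that the indirect route sidesteps. On the easier side, the natural embedding (take $T^\emptyset = T$ and leave every other $T^A, W^A$ empty) realizes each $X_{S,T}$ as an $X_\S$ when $\hat{f} \geq 2$, but this fails when $\hat{f} \leq 1$ because the families $\C_0, \C_1$ become empty. More seriously, the reverse embedding is simply unavailable: the intersection $X_\S \cap (S_v \cup T_v)$ can take proper non-trivial shapes such as $\{v_i \mid i \in A\}$ or $T_v \cup \{v_i \mid i \in A\}$, and no $X_{S',T'}$ attains these. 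Routing through the factor-existence statements bypasses all of this, at the cost of invoking Theorem~\ref{thm:main} as a black box.
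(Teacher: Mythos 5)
Your chain of equivalences is logically sound: since $c$ is proper, $f$-factors of $G$ and pc-$f$-factors of $G^c$ coincide, and composing this with Theorems~\ref{thm:tuttef} and~\ref{thm:main} does yield (C1) $\Leftrightarrow$ (C2). The problem is not validity but circularity relative to the lemma's purpose. Lemma~\ref{lem:pc} exists solely so that the next theorem can deduce Tutte's $f$-factor theorem (Theorem~\ref{thm:tuttef}) \emph{from} Theorem~\ref{thm:main}; by invoking Theorem~\ref{thm:tuttef} inside the lemma, you reduce that deduction to ``Tutte's theorem implies Tutte's theorem.'' So your argument proves the literal biconditional but cannot serve the role the paper needs it for, and in that context it has to be counted as the wrong approach.

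The paper's proof is exactly the direct comparison you chose to avoid, and the obstacle you correctly identified --- that a general $X_\S$ need not equal any $X_{S,T}$ --- is overcome not by realizing every $X_\S$ as some $X_{S,T}$ but by a monotonicity argument: deleting a vertex from $W^A$, or moving a vertex from $T^A$ to $T^\emptyset$, decreases $|X_\S|$ by $|A|\geq 1$ while decreasing $\odd(\Gfc{-}X_\S)$ by at most $1$; hence the inequality for the fully reduced palette system $\S^*$, whose test set \emph{is} an $X_{S,T}$, implies the inequality for the original $\S$. That is the one idea missing from your write-up, and it is what makes the direct route work. Your observation about $\hat f\leq 1$ is a fair catch --- taking $T^\emptyset=T$ requires $\emptyset\in\C_0$, i.e.\ $\hat f\geq 2$, and the paper's proof does gloss over the degenerate case (where every palette system has $T=W=\emptyset$) --- but that calls for a short separate argument, not for abandoning the direct proof.
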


\begin{proof}
	Since $c$ is a proper edge-colouring, $G_f$ is isomorphic to $G_f^c$.
	Suppose that $G_f^c$
	satisfies (C2). Let $S$ and $T$ be disjoint subsets of $V(G)$. Let
	$\S$ be the palette system obtained from $S$ and $T$ by leaving
	$S$ unchanged, setting $T^\emptyset = T$, and leaving $T^A$ and
	$W^A$ empty for all non-empty $A$. In this case, it follows from the
	definitions of $X_{S,T}$ and $X_\S$ that $X_{S,T} = X_\S$. Since
	$G_f^c$ satisfies (C2), $\odd(G_f^c{-}X_\S)\leq|X_\S|$, and it
	immediately follows that $\odd(G_f{-}X_{S,T})\leq|X_{S,T}|$; i.e.,
	$G_f$ satisfies (C1).

	Now suppose that $G_f$ satisfies (C1). Let $\S=(S,T,W)$ be a palette
	system in $G^c$. Let $A$ be an element of $\C_1$, and
	consider modifying $\S$ by removing a vertex $w$ from $W^A$. By the
	definition of $X_W$, the effect of this operation is to remove $|A|$
	vertices from $X_W$ and thereby reduce $|X_\S|$ by $|A|$, which is
	at least 1 since $A$ is non-empty. On the other hand, if the component
	of $\Gfc{-}X_\S$ containing $T_w$ is odd and $|A|$ is odd, then the
	effect of removing $w$ from $W^A$ is to
	reduce $\odd(G_f^c{-}X_\S)$ by 1; otherwise, $\odd(\Gfc{-}X_\S)$ stays
	the same or even increases. The effect of modifying
	$\S$ by moving a vertex $v$ from $T^A$ to $T^\emptyset$ is
	identical. We deduce that if $\S'$ is obtained from $\S$ by
	repeatedly performing these operations, then
	$\odd(G_f^c{-}X_{\S'})\leq|X_{\S'}|$ implies
	$\odd(G_f^c{-}X_\S)\leq|X_\S|$. 

	Let $\S^*$ be the palette system obtained from $\S$ by moving all
	vertices in $T^A$ to $T^\emptyset$ and removing all vertices from
	$W^A$, for all non-empty $A$. Observe that $X_{S,T}=X_{\S^*}$. Since
	$G_f$ satisfies (C1), $\odd(G_f{-}X_{S,T})\leq|X_{S,T}|$, from which
	it immediately follows that $\odd(G_f^c{-}X_{\S^*})\leq |X_{\S^*}|$.
	By the argument given in the previous paragraph, we conclude that
	$\odd(G_f^c{-}X_\S)\leq|X_\S|$ as required. \hfill\qed

\end{proof}

\begin{thm}
	Theorem~\ref{thm:main} implies Tutte's $f$-factor theorem.
\end{thm}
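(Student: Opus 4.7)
The plan is to deduce Theorem~\ref{thm:tuttef} from Theorem~\ref{thm:main} by endowing $G$ with an auxiliary proper edge-colouring and then invoking Lemma~\ref{lem:pc} as a bridge. Concretely, given a graph $G$ and a function $f:V(G)\to\mathbb{N}$, I would first fix any proper edge-colouring $c$ of $G$; the simplest valid choice is the ``trivial'' one in which each edge receives its own distinct colour. Since adjacent edges in a proper edge-colouring carry different colours, every $f$-factor of $G$ is automatically a properly coloured $f$-factor of $G^c$ and vice versa, so $G$ has an $f$-factor if and only if $G^c$ has a pc-$f$-factor. Moreover, a proper edge-colouring satisfies $d^c(v)=d_G(v)$ at every vertex, which guarantees that the auxiliary graphs $G_f^c$ and $G_f$ are isomorphic (an observation already used in the proof of Lemma~\ref{lem:pc}).

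With these identifications in place, the argument reduces to a short chain of equivalences. Applying Theorem~\ref{thm:main} to $G^c$ shows that $G^c$ has a pc-$f$-factor if and only if $G_f^c$ satisfies condition (C2). Applying Lemma~\ref{lem:pc} then translates (C2) on $G_f^c$ into condition (C1) on $G_f$. Reading the chain from end to end yields: $G$ has an $f$-factor if and only if $G_f$ satisfies (C1), which is precisely the statement of Theorem~\ref{thm:tuttef}. The only minor wrinkle is the degenerate case in which $f(v)>d_G(v)$ for some vertex $v$, so that $G_f$ (and $G_f^c$) fail to exist; by convention $G$ then has no $f$-factor and the condition is vacuous, so this case is handled trivially. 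Since the nontrivial combinatorial content of comparing palette systems to pairs $(S,T)$ has already been carried out in Lemma~\ref{lem:pc}, I do not expect any further obstacle; the theorem follows from a straightforward bookkeeping step rather than a substantial new argument.
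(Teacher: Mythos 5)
Your proposal is correct and follows essentially the same route as the paper: endow $G$ with a proper edge-colouring, observe that $f$-factors of $G$ coincide with pc-$f$-factors of $G^c$, and chain Theorem~\ref{thm:main} with Lemma~\ref{lem:pc}; the paper merely packages the same equivalences as two proofs by contradiction rather than a direct chain. Your choice of the rainbow (all-distinct) colouring is a harmless simplification that avoids appealing to the existence of a proper edge-colouring with few colours.
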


\begin{proof}
	Let $G$ be a graph, let $f$ be a function from $V(G)$ to
	$\mathbb{N}$ and let $c$ be a proper edge-colouring of $G$. Suppose
	$G_f$ satisfies (C1) but $G$ has no $f$-factor. Since $G$ has no
	$f$-factor, $G^c$ has no pc-$f$-factor. By Theorem~\ref{thm:main},
	$G_f^c$ does not satisfy (C2). But then $G_f$ does not satisfy (C1)
	by Lemma~\ref{lem:pc}, which is a contradiction.

	Now suppose that $G_f$ does not satisfy (C1) but has an $f$-factor.
	By Lemma~\ref{lem:pc}, $G_f^c$ does not satisfy (C2) and therefore
	by Theorem~\ref{thm:main}, $G^c$ has no pc-$f$-factor. But since $c$
	is a proper edge-colouring, every $f$-factor of $G$ is a
	pc-$f$-factor of $G^c$, so $G$ has no $f$-factor, which is a
	contradiction.\hfill\qed

\end{proof}

\subsection{Alternative Statement}

We now present a different version of Theorem~\ref{thm:main}; the
difference between the original statement and the one that follows is
analogous to the difference between Tutte's $f$-factor theorem as
presented in Theorem~\ref{thm:tuttef} and as in
Theorem~\ref{thm:tuttefb}.

In order to restate our main result, we define two operations on
vertices. Let $G$ be a graph and $c$ a colouring of its edges. 
When we $c$-{\em split} a vertex $v$ of 
$G$, we replace $v$ with $d^c(v)$ vertices denoted by $v_i$ for each
colour $i$ in $E^c(v)$, and for each $w$ in $N_G(v)$, we replace the edge $vw$ by $v_{c(vw)}w$.
When we {\em twin} a vertex $x$ of $G$, we replace $x$ with two
vertices $x_0$ and $x_1$, and for each $y$ in $N_G(x)$, we replace the edge $xy$ with
two edges $x_0y,x_1y$; we also add an edge between $x_0$ and $x_1$.
Let $\S=(S,T,W)$ be a palette system and let $G_\S$ denote the
graph obtained from $G$ by performing the following operations
for every vertex $y$ in $S\cup T \cup W$: if $y$ is in $S$, we delete $y$;
if $y$ is in $T^A$ for some $A$ in $\C_0$, we $c$-split $y$ and then delete
$y_i$ whenever $i\in A$; if $y$ is in $W^A$ for some $A$ in $\C_1$, we delete
the edge $yz$ whenever $c(yz)\in A$ and then twin $y$ if $f(y)+|A|$ is even.
We denote by $h(\S)$ the number of odd components of $G_\S$.


\begin{thm}
	Let $G^c$ be an edge-coloured graph and let $f$ be a function from $V(G)$ 
	to $\mathbb{N}$. Then there is a properly coloured
	$f$-factor in $G^c$ if and only if, for every palette system
	$\S$, we have
	\[
		\sum_{u \in S} f(u) + \sum_{A\in \C_0} \sum_{u\in T^A}
		(d^c(u)-f(u)+|A|) + \sum_{A\in \C_1}|W^A| |A| \geq
		h(\S).
	\]
\end{thm}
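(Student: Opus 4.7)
The plan is to derive this alternative statement from Theorem~\ref{thm:main} by showing that, for every palette system $\S$, the condition $\odd(G^c_f - X_\S) \leq |X_\S|$ is equivalent to the displayed inequality. The argument is parallel to the classical equivalence between Tutte's two formulations of the $f$-factor theorem (Theorems~\ref{thm:tuttef} and~\ref{thm:tuttefb}), but carried out in the auxiliary graph $G^c_f$ and its palette systems.

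First, I would compute $|X_\S|$ directly from the definitions of $X_S, X_T, X_W$ given in Section~\ref{sec:intro}:
\[
|X_\S| = \sum_{u \in S} d^c(u) + \sum_{A \in \C_0}\sum_{u \in T^A}(d^c(u)-f(u)+|A|) + \sum_{A \in \C_1}|W^A|\,|A|.
\]
This differs from the left-hand side of the present inequality only in the $S$-summand, where $f(u)$ replaces $d^c(u)$, so it exceeds the LHS by exactly $\sum_{u \in S}(d^c(u)-f(u)) = \sum_{u \in S}|T_u|$.

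Second, I would prove the identity $\odd(G^c_f - X_\S) = h(\S) + \sum_{u \in S}|T_u|$ by exhibiting a parity-preserving bijection between the components of $G_\S$ and the non-singleton components of $G^c_f - X_\S$. The key point is that the three operations defining $G_\S$ are tailored to the local structure of each palette class. For $u \in S$, the deletion of $S_u$ leaves $T_u$ as $|T_u|$ isolated odd components of $G^c_f - X_\S$ which have no counterpart in $G_\S$, accounting for the correction term. For $u \in T^A$, the $c$-split followed by deletion of the $y_i$ with $i \in A$ reproduces exactly the disconnected stub structure of $S_u \setminus \{u_i : i \in A\}$ in $G^c_f - X_\S$, each stub carrying one external edge to the corresponding neighbour's bag. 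For $u \in W^A$, the surviving bipartite block in $G^c_f - X_\S$ has $2d^c(u) - f(u) - |A|$ vertices and external edges from $S_u \setminus \{u_i : i \in A\}$; the twinning rule conditioned on the parity of $f(u)+|A|$ is engineered precisely so that the single vertex or twin pair in $G_\S$ has the same parity and the same external attachments. Vertices of $V(G^c) \setminus (S \cup T \cup W)$ are handled analogously, and one reads off the claimed identity by summing parities component by component.

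Putting the two ingredients together, $\odd(G^c_f - X_\S) \leq |X_\S|$ rearranges to the inequality of the present theorem, so the two conditions are equivalent and the result follows from Theorem~\ref{thm:main}. The main obstacle is the case analysis underlying the parity-preserving correspondence: in particular, confirming that the twinning rule on $W^A$ interlocks correctly with the default treatment of untouched vertices so that the count of odd components of $G_\S$ (the definition of $h(\S)$) matches, case by case, the parities contributed by the various bag types in $G^c_f - X_\S$ once the $S$-singletons have been subtracted.
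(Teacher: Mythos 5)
Your proposal is correct and follows essentially the same route as the paper: compute $|X_\S|$ from the definitions, observe it exceeds the left-hand side by $\sum_{u\in S}(d^c(u)-f(u))$, account for that difference via the isolated vertices $\bigcup_{u\in S}T_u$ in $\Gfc{-}X_\S$, and then match $h(\S)$ with the remaining odd components through a parity-preserving contraction of each block $(S_w\setminus\{w_i\mid i\in A\})\cup T_w$ for $w\in W^A$, with the twinning rule compensating for the parity of $f(w)+|A|$. The paper's proof is exactly this argument, stated as an isomorphism between the contracted graph $\Gfc{-}X_\S{-}I_S$ and $G_\S$.
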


\begin{proof}
	We show that the inequality in the statement of the theorem is a
	restatement of that in Theorem~\ref{thm:main}.
	Add $\sum_{u\in S} d^c(u)$ to both sides of the inequality in
	the statement of the theorem and rearrange it to obtain:
	\[
		h(\S) + \sum_{u\in S} (d^c(u)-f(u))
		\leq
		\sum_{u\in S} d^c(u) + \sum_{A\in \C_0} \sum_{u\in T^A}
		(d^c(u)-f(u)+|A|) + \sum_{A\in \C_1} \sum_{u \in W^A} |A|.
	\]
	It follows from the definition of $X_\S$ that the right-hand side of
	the (re\-arranged) inequality is equal to $|X_\S|$. By
	Theorem~\ref{thm:main}, it is enough to show that the left-hand
	side is equal to $\odd(\Gfc{-}X_\S)$. 

	Observe that each vertex $v$ in $S$ contributes $d^c(v){-}f(v)$
	isolated vertices (and therefore odd components) to $\Gfc{-}X_\S$.
	We denote those isolated vertices by $I_S$; that is, $I_S=\bigcup_{u
	\in S} T_u$. Consider a vertex $w$ which is
	in $W^A$ for some $A$. Suppose we identify $(S_w \setminus \{w_i
	\mid i \in A\})\cup T_w$ to a single vertex $w^*$ 
	and then, if $f(w){+}|A|$ is even, we twin $w^*$.
	Since
	$|(S_w \setminus \{w_i
	\mid i \in A\})\cup T_w|$ is even if and only
	if
	$f(w){+}|A|$ is even, this operation preserves the number of odd components in the graph.
	If we perform this operation for every vertex in $W,$ we obtain
	from $\Gfc{-}X_\S{-}I_S$ a graph isomorphic to $G_\S$. Therefore,
	$\odd(\Gfc{-}X_\S{-}I_S)=\odd(G_\S)=h(\S)$, which completes the
	proof.	\hfill\qed

\end{proof}

\section{Computational Results}\label{sec:comp}

We now turn to global properties of coloured factors. The main result
of this section is that, given an edge-coloured graph $G^c$ and a
function $f:V(G)\rightarrow \mathbb{N}$, deciding whether $G^c$ has a
rainbow component $f$-factor (rc-$f$-factor) is an $\NP$-complete
problem. This implies that it is not possible to reduce rc-$f$-factors
to 1-factors unless $\Poly{=}\NP$. We omit a full discussion of
computational complexity theory and refer the reader to
\cite{Garey1979}. Formally, the computational problem we are
interested in is the following.

\begin{problem}
	\problemtitle{{\sc Rainbow component factor (rc-fac)}}
	\probleminput{A graph $G$, a colouring $c$ of its edges, a function
	$f:V(G)\rightarrow \mathbb{N}$}
	\problemquestion{Does $G^c$ have a rainbow component $f$-factor?}
\end{problem}
\def\rcfac{{{\sc rc-fac}\xspace}}

To prove our main result we introduce a problem that is known to be
$\NP$-hard. We define a {\em $1$-in-$3$-colouring} of a 3-uniform
hypergraph $H$ to be a function $\phi:V(H)\rightarrow
\{-1,1\}$ such that in each edge of $H$ there is exactly one
vertex $v$ with $\phi(v)=1$.

\begin{problem}
	\problemtitle{{\sc Hypergraph $1$-in-$3$-Colouring (h-1-in-3-col)}}
	\probleminput{A 3-uniform hypergraph $H$}
	\problemquestion{Does $H$ have a $1$-in-$3$-colouring?}
\end{problem}
\def\hc{{{\sc h-1-in-3-col} }}

It is known that \hc is $\NP$-hard even for $k$-regular hypergraphs,
where $k\geq3$~\cite{Kratochvil2003}. We will reduce this version of
\hc to \rcfac{}; given a $k$-regular, 3-uniform hypergraph $H$, we
construct a graph $G_H$, a colouring $c$ and a function $f$ such that
$G_H^c$ has an rc-$f$-factor if and only if $H$ has a
$1$-in-$3$-colouring. In fact, the function $f$ will be the constant
function with $f(v)=k{-}1$ for every vertex $v$ in $V(G)$; thus, we can prove the
stronger result that the following problem is $\NP$-hard, for any
$r\geq 2$.

\begin{problem}
	\problemtitle{{\sc Rainbow component $r$-factor (rc-$r$-fac)}}
	\probleminput{A graph $G$, a colouring $c$ of its edges}
	\problemquestion{Does $G^c$ have a rainbow component $r$-factor?}
\end{problem}

We remark that the constant $r$ is a fixed part of the definition of
the problem, not a part of the input. The hardness of this problem
implies hardness of the version of the problem where the constant is
in the input, which in turn implies hardness of \rcfac{}.

\begin{thm}\label{thm:rcfac} The {\sc rainbow component factor
	problem} is $\NP$-complete. Moreover, the {\sc rainbow component
	$r$-factor} problem is $\NP$-complete, for any integer $r\geq 2$.
\end{thm}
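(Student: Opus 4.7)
The plan is to prove $\NP$-completeness by a polynomial reduction from \hc restricted to $k$-regular $3$-uniform hypergraphs, which is $\NP$-hard for $k \geq 3$ by~\cite{Kratochvil2003}; taking $k = r{+}1$ will give hardness of {\sc rc-$r$-fac} for each integer $r \geq 2$, and this in turn implies hardness of \rcfac. Membership in $\NP$ is routine: given a candidate spanning subgraph $F$, I check in polynomial time that $d_F(v)=f(v)$ for every $v$, compute the components of $F$ by breadth-first search, and verify that within each component no two edges share a colour.

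For the hardness reduction, given an $(r{+}1)$-regular $3$-uniform hypergraph $H$, I will build an edge-coloured graph $G_H^c$ and set the constant function $f \equiv r$. The intended skeleton of $G_H$ is bipartite: one side contains a vertex $v'$ for each $v \in V(H)$, and the other side contains a gadget $E_e$ for each hyperedge $e \in E(H)$. Each $v'$ has degree $k = r{+}1$ in $G_H$ (one connection per incident hyperedge), so the constraint $f(v')=r$ forces exactly one edge incident to $v'$ to be absent from any $f$-factor; this absent edge is what I call $v$'s ``vote''. The gadget $E_e$ is engineered so that in any rainbow-component $f$-factor, exactly one of its three external connections to $u', v', w'$ is also absent. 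Setting $\phi(v)=1$ iff $v$'s vote goes to the hyperedge whose gadget drops the edge to $v'$ then produces a $1$-in-$3$-colouring of $H$, and conversely every $1$-in-$3$-colouring can be used to recover a valid factor.

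The central technical step is designing the hyperedge gadget $E_e$ together with its edge-colouring so that the rainbow-component condition translates exactly into the 1-in-3 constraint. The natural tool is that two edges of the same colour cannot lie together in any single component of the factor. By assigning colour patterns inside $E_e$ that use colours private to $e$ to forbid the two- or three-``on'' configurations, and by sharing colours across gadget boundaries where needed to propagate consistency, I aim to force the unique admissible restriction of any rainbow-component $r$-factor to $E_e$ to leave exactly one of the three external edges unused. I expect this gadget construction to be the main obstacle: it must be small, use only polynomially many colours, and each of its internal vertices must already have degree $r$ in $G_H$ so that the constant-$f$ requirement is satisfied everywhere.

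Once such a gadget is available, the reduction is completed in polynomial time: $G_H^c$ has size polynomial in $|V(H)| + |E(H)|$, the function $f$ is the constant $r$, and the number of colours used is polynomial in the input. The equivalence between rainbow-component $r$-factors of $G_H^c$ and $1$-in-$3$-colourings of $H$ then gives the desired reduction and yields $\NP$-completeness of both {\sc rc-$r$-fac} (for each fixed $r \geq 2$) and \rcfac, as required.
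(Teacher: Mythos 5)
Your overall strategy---membership in $\NP$ by checking degrees and traversing components, plus a reduction from {\sc h-1-in-3-col} on $(r{+}1)$-regular $3$-uniform hypergraphs with the constant function $f\equiv r$---is exactly the paper's. But the proposal has two genuine gaps, one of which is structural. First, the entire content of the proof is the hyperedge gadget together with its colouring, and you do not construct it; you only list the properties you hope it will have. Second, and more seriously, your vertex-side encoding cannot work as described. You represent each $v\in V(H)$ by a single vertex $v'$ of degree $r{+}1$, so any $f$-factor omits exactly one edge at $v'$; this encodes a choice of \emph{one} incident hyperedge. But $\phi(v)=1$ in a $1$-in-$3$-colouring means $v$ is the designated vertex in \emph{all} $r{+}1$ hyperedges containing it. The bookkeeping already fails: if every $v'$ omits exactly one edge and every gadget $E_e$ omits exactly one of its three external edges, then counting omitted vertex--gadget edges from both sides gives $|V(H)|=|E(H)|=(r{+}1)|V(H)|/3$, which is false for $r\geq 3$; and for $r=2$ the rule ``$\phi(v)=1$ iff $v$'s omitted edge is the one its gadget omits'' is vacuously satisfied by every vertex (an edge is either in $F$ or not, symmetrically), so the induced $\phi$ is identically $1$ and does not yield a $1$-in-$3$-colouring.

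The paper avoids this by giving each hypergraph vertex $x$ a compound gadget: a central clique $K_{r+1}$ on $x_1,\dots,x_{r+1}$ together with $r{+}1$ satellite cliques $Q_i^x\cong K_r$, with $x_i$ joined to all of $Q_i^x$ and $v_e$ joined to all of $Q_i^x$ when $e$ is the $i$th edge of $x$. The colouring gives each vertex $u$ of $Q_i^x$ exactly $r{+}1$ incident edges, two of which (namely $ux_i$ and $uv_e$) share a colour; hence all of $Q_i^x$ lies in $F$ and its component is either $Q_i^x\cup\{x_i\}$ or $Q_i^x\cup\{v_e\}$, never both. The central clique then forces these $r{+}1$ local choices to agree, so each $x$ carries a single Boolean value ($\phi(x)=1$ iff the central clique is itself a component of $F$), which is the consistency your single vertex $v'$ cannot express. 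If you want to salvage your bipartite skeleton, you would need to replace $v'$ by a gadget with such a synchronising mechanism; as written, the reduction does not establish the claimed equivalence even granting an unspecified hyperedge gadget.
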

\def\GHc{{G_H^c}}

\begin{proof}
 	It is easy to see that the problem is in $\NP$; a nondeterministic
	Turing machine can guess a set of edges and it is easy to verify in
	polynomial time whether a set of edges induces 1) an $r$-factor and 2)
	a graph each component of which is rainbow (by traversal of the
	components). 

	Let $r$ be an integer greater than or equal to 2, and let
	$H$ be an $(r{+}1)$-regular, 3-uniform hypergraph. We will construct
	an edge-coloured graph $\GHc$ such that $\GHc$ has an rc-$r$-factor
	if and only if $H$ has a $1$-in-$3$-colouring. As discussed above,
	this is enough to prove the theorem.

	We now describe the construction of $\GHc$. For each vertex $x$ in
	$H$, the graph $G_H$ contains a subgraph isomorphic to $K_{r+1}$
	and $r{+}1$ subgraphs isomorphic to $K_r$ (where $K_n$ denotes
	the complete graph on $n$ vertices). We call the former the
	{\em central clique} of $x$, and we denote the latter by $Q^x_i$ for
	$1\leq i \leq r{+}1$. We denote the vertices of the central clique
	of $x$ by $x_1,\dots,x_{r+1}$, and for each $i$ we add an edge from
	$x_i$ to each vertex of $Q^x_i$. 

	For each edge $e=\{x,y,z\}$ in $H$ we add a vertex to $G_H$ denoted
	by $v_e$. We arbitrarily order the edges of $H$ in which each vertex
	appears; we may thus refer to $e$ as (say) the $i$th edge of $x$,
	the $j$th edge of $y$, and the $k$th edge of $z$.
	In that case, we add an edge from $v_e$ to each vertex of $Q^x_i$,
	$Q^y_j$, and $Q^z_k$.

	We now describe the colouring $c$ of the edges of $G_H$. To simplify
	our notation, we introduce the constants $\rho_0=\binom{r}{2}$ and
	$\rho_1 = \binom{r+1}{2}$. All colours assigned by $c$ will be in
	$\{1,\dots, \rho_1\}$. For each vertex $x$ in $H$, $c$ assigns the
	edges of the central clique of $x$ distinct colours (there are
	$\rho_1$ edges in the clique). For $1 \leq i \leq r{+}1$, $c$
	assigns the edges in $Q_i^x$ distinct colours from
	$\{1,\dots,\rho_0\}$ (there are $\rho_0$ such edges). Each of the
	$r$ edges from $x_i$ to $Q_i^x$ is assigned a distinct colour from
	$\{\rho_0{+}1,\dots,\rho_1\}$ by $c$ (note that $\rho_1-\rho_0=r$).
	Finally, when the $j$th edge of $x$ in $H$ is $e$,
	we set $c(v_e u)=c(u x_j)$ for each
	vertex $u$ in $Q_j^x$; in other words, $c$ assigns the edge $v_e u$ the same colour that it assigns
	the edge $u x_j$. It is easy to see that this construction can
	be performed in polynomial time. 

	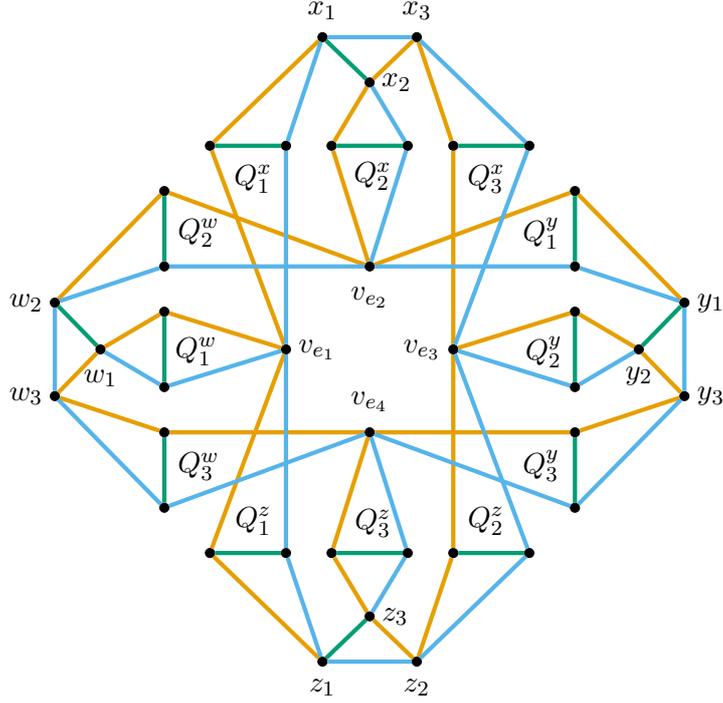
\begin{figure}[t]
		\begin{center}
			\def\a{1.0}
\def\b{0.6}
\def\c{1.2}
\def\d{3.0*\a+4.0*\b}
\begin{tikzpicture}

				\tikzstyle{every node} = [draw,circle,fill=black,inner
				sep=1.2]
				
				\node(x1)[label=$x_1$] at (\a+1.8*\b+\c,2.2*\c+\d) {};
				\node(x2)[label=0:$x_2$] at (1.5*\a+2*\b+\c,1.7*\c+\d) {};
				\node(x3)[label=$x_3$] at (2*\a+2.2*\b+\c,2.2*\c+\d) {};
				
				\node(y1)[label=0:$y_1$] at (2.2*\c+\d,2*\a+2.2*\b+\c) {};
				\node(y2)[label=-90:$y_2$] at (1.7*\c+\d,1.5*\a+2*\b+\c) {};
				\node(y3)[label=0:$y_3$] at (2.2*\c+\d,\a+1.8*\b+\c) {};

				\node(w1)[label=180:$w_2$] at (-0.2*\c,2*\a+2.2*\b+\c) {};
				\node(w2)[label=-90:$w_1$] at (0.3*\c,1.5*\a+2*\b+\c) {};
				\node(w3)[label=180:$w_3$] at (-0.2*\c,\a+1.8*\b+\c) {};

				\node(z1)[label=-90:$z_1$] at (\a+1.8*\b+\c,-0.2*\c) {};
				\node(z2)[label=0:$z_3$] at (1.5*\a+2*\b+\c,0.3*\c) {};
				\node(z3)[label=-90:$z_2$] at (2*\a+2.2*\b+\c,-0.2*\c) {};

				\node (x11) at (\b+\c,\c+\d) {};
				\node[label={[label distance=5]-135:$Q_1^x$}] (x12) at (\a+\b+\c,\c+\d) {};
				
				\node (x21) at (\a+2*\b+\c,\c+\d) {};
				\node[label={[label distance=5]-145:$Q_2^x$}] (x22) at (2*\a+2*\b+\c,\c+\d) {};

				\node[label={[label distance=5]-45:$Q_3^x$}] (x31) at (2*\a+3*\b+\c,\c+\d) {};
				\node (x32) at (3*\a+3*\b+\c,\c+\d) {};
				
				\node (y11) at (\c+\d,3*\a+3*\b+\c) {};
				\node[label={[label distance=5]135:$Q_1^y$}] (y12) at (\c+\d,2*\a+3*\b+\c) {};
				
				\node (y21) at (\c+\d,2*\a+2*\b+\c) {};
				\node[label={[label distance=5]125:$Q_2^y$}] (y22) at (\c+\d,\a+2*\b+\c) {};

				\node[label={[label distance=5]225:$Q_3^y$}] (y31) at (\c+\d,\a+\b+\c) {};
				\node (y32) at (\c+\d,\b+\c) {};

				\node (z11) at (\b+\c,\c) {};
				\node[label={[label distance=5]135:$Q_1^z$}] (z12) at (\a+\b+\c,\c) {};
				
				\node (z21) at (\a+2*\b+\c,\c) {};
				\node[label={[label distance=5]145:$Q_3^z$}] (z22) at (2*\a+2*\b+\c,\c) {};

				\node[label={[label distance=5]45:$Q_2^z$}] (z31) at (2*\a+3*\b+\c,\c) {};
				\node (z32) at (3*\a+3*\b+\c,\c) {};

				\node (w11) at (\c,3*\a+3*\b+\c) {};
				\node[label={[label distance=5]45:$Q_2^w$}] (w12) at (\c,2*\a+3*\b+\c) {};
				
				\node (w21) at (\c,2*\a+2*\b+\c) {};
				\node[label={[label distance=5]55:$Q_1^w$}] (w22) at (\c,\a+2*\b+\c) {};

				\node[label={[label distance=5]-45:$Q_3^w$}] (w31) at (\c,\a+\b+\c) {};
				\node (w32) at (\c,\b+\c) {};
				
				\node[label=0:$v_{e_1}$] (e1) at (\a+\b+\c,1.5*\a+2*\b+\c) {};
				\node[label=-90:$v_{e_2}$] (e2) at (1.5*\a+2*\b+\c,2*\a+3*\b+\c)
								{}; 
				\node[label=180:$v_{e_3}$] (e3) at
												(2*\a+3*\b+\c,1.5*\a+2*\b+\c) {};
				\node[label=90:$v_{e_4}$] (e4) at (1.5*\a+2*\b+\c,\a+\b+\c) {};

				\foreach \from/\to in {x1/x2,y1/y2,z1/z2,w1/w2}
				\draw[ultra thick,color=mygreen] (\from) -- (\to);
				\foreach \from/\to in {x3/x2,y3/y2,z3/z2,w3/w2}
				\draw[ultra thick,color=myorange] (\from) -- (\to);
				\foreach \from/\to in {x1/x3,y1/y3,z1/z3,w1/w3}
				\draw[ultra thick,color=myblue] (\from) -- (\to);

				\foreach \from/\to in
				{x11/x12,x21/x22,x31/x32,y11/y12,y21/y22,y31/y32,z11/z12,z21/z22,z31/z32,w11/w12,w21/w22,w31/w32}
				\draw[ultra thick,color=mygreen] (\from) -- (\to);

				\foreach \from/\to in {x1/x11,x2/x21,x3/x31,y1/y11,y2/y21,y3/y31,z1/z11,z2/z21,z3/z31,w1/w11,w2/w21,w3/w31}
				\draw[ultra thick,color=myorange] (\from) -- (\to);
				\foreach \from/\to in
				{x1/x12,x2/x22,x3/x32,y1/y12,y2/y22,y3/y32,z1/z12,z2/z22,z3/z32,w1/w12,w2/w22,w3/w32}
				\draw[ultra thick,color=myblue] (\from) -- (\to);

				\foreach \from/\to in
				{e1/z11,e1/w21,e1/x11,e2/w11,e2/x21,e2/y11,e3/x31,e3/y21,e3/z31,e4/w31,e4/z21,e4/y31}
				\draw[ultra thick,color=myorange] (\from) -- (\to);
				\foreach \from/\to in
				{e1/z12,e1/w22,e1/x12,e2/w12,e2/x22,e2/y12,e3/x32,e3/y22,e3/z32,e4/w32,e4/z22,e4/y32}
				\draw[ultra thick,color=myblue] (\from) -- (\to);
\end{tikzpicture}
			\caption{$G_H^c$ for $r{=}2$ where $H$ is the hypergraph on
				$\{w,x,y,z\}$ with edges $e_1{=}\{z,w,x\}$, $e_2{=}\{w,x,y\}$,
				$e_3{=}\{x,y,z\}$, $e_4{=}\{y,z,w\}$. Colours
				\textcolor{mygreen}{1}, \textcolor{myblue}{2},
				\textcolor{myorange}{3} are shown in \textcolor{mygreen}{{\bf
				green}}, \textcolor{myblue}{{\bf blue}}, and
			\textcolor{myorange}{{\bf orange}}, respectively.}
			\label{fig:GHc}
		\end{center}
	\end{figure}

	In Figure~\ref{fig:GHc} we give an example of the
	construction where $H$ is the complete 3-uniform hypergraph on four
	vertices. It is easy to see that $H$ has no 1-in-3-colouring. The
	reader is invited to check that $G_H^c$ has no rc-2-factor.
	Note that to cover $x_1$, for example, such a factor must include as a component either
	the central clique of $x$ or the clique on
	$x_1\cup Q_1^x$.

	Now suppose that $H$ has a 1-in-3-colouring $\phi$. We construct an
	rc-$r$-factor $F_\phi$ for $\GHc$. Let $x$ be a vertex in $V(H)$ and
	suppose $x$ appears in edges $e_1,\dots,e_{r+1}$. Then, if
	$\phi(x)=1$, we include in $F_\phi$ the edges of the central clique
	of $x$ and the edges of the clique on $v_{e_i}\cup Q_i^x$ for $1\leq
	i\leq r{+}1$; otherwise, we include the edges of the clique on $x_i
	\cup Q_i^x$ for $1\leq i \leq r{+}1$. The obtained subgraph is clearly
	rainbow as every component is a rainbow clique. For each vertex $x$
	in $H$, the vertices in the central clique of $x$ are covered by
	exactly $r$ edges of $F_\phi$, as are those in $Q_i^x$ for $1 \leq i
	\leq r{+}1$. It remains to check that, for each edge $e$ of $H$, the
	vertex $v_e$ is covered by exactly $r$ edges of $F_\phi$. For each
	vertex $x$ in $e$ such that $\phi(x)=1$, we included $r$ edges that
	contain $v_e$. Since $\phi$ is a 1-in-3-colouring, there is exactly
	one such vertex in $e$, and thus $v_e$ is covered by exactly $r$
	edges, as required.

	Now suppose that $\GHc$ has an rc-$r$-factor $F$. We will show that
	$H$ has a 1-in-3 colouring $\phi_F$. Observe that, for each $x$ in
	$V(H)$ and for $1 \leq i \leq r{+}1$, every edge of $Q_i^x$ must be
	in $F$, because every vertex in $Q_i^x$ is incident with $r{+}1$
	edges, two of which have the same colour. Therefore, if there is an
	edge in $F$ from $v_e$ to $Q_i^x$, there cannot be an edge in $F$
	from $v_e$ to $Q_j^y$, unless $x=y$ and $i=j$, because such a pair
	of edges would connect a monochromatic pair of edges. We deduce that
	there is exactly one vertex $x$ in each edge $e$ of $H$ such that
	$v_e\cup Q_i^x$ is a component of $F$ for some value of $i$.
	Furthermore, if $v_e\cup Q_i^x$ is not a component of $F$, then
	$x_i\cup Q_i^x$ is a component of $F$. We conclude that, if the
	$i$th and $j$th edges of $x$ are $e$ and $e'$, and $v_e\cup Q_i^x$
	is a component of $F$, then $v_{e'}\cup Q_j^x$ is a component of $F$
	(otherwise, there is an edge $x_ix_j$ in $F$ and then there are more
	than $r$ edges in $F$ incident with $x_j$). It follows that we may
	define a valid 1-in-3-colouring $\phi_F$ of $H$ by setting
	$\phi_F(x)=1$ if and only if $v_e\cup Q_i^x$ is a component of $F$
	whenever $e$ is the $i$th edge of $x$ in $H$. \hfill\qed

\end{proof}

It is natural to ask whether we can find a nice characterisation of
edge-coloured graphs with $\Pi$-$f$-factors, where $\Pi$ is some local
property of edge-coloured graphs; that is, $\Pi$ is valid for an
edge-coloured graph $G^c$ whenever it is valid within a constant
distance of each vertex. Unfortunately, such a conjecture would be false
in general (unless $\Poly=\NP$). Consider the following problem for
integers $d$ and $r$.

\begin{problem}
	\problemtitle{
		{\sc Distance-$d$-coloured $r$-factor
	(d$d$c-$r$-fac)}} 
	\probleminput{A graph $G$, a colouring $c$ of its edges}
	\problemquestion{Does $G^c$ have a distance-$d$-coloured
	$r$-factor?}

\end{problem}

The proof of Theorem~\ref{thm:rcfac} can be easily modified to show
that the above problem is hard for $r\geq 2$ and $d\geq 3$. It is
enough to demonstrate that, in this case, a set of edges induces a
distance-$d$-coloured $r$-factor if and only if it induces an
rc-$r$-factor. One direction is immediate, since every rc-$r$-factor
in every edge-coloured graph is distance-$d$-coloured by definition.
The other direction requires observing that a distance-$d$-coloured
$r$-factor in the constructed graph $\GHc$ must be the disjoint union
of rainbow cliques. With some modifications to $\GHc$, we may even
prove that {\sc d2c-$r$-fac} is hard using the same proof. However,
the modifications are tedious (we leave them as a puzzle for the
interested reader) and suboptimal in a sense we will now describe. The
colouring constructed in the proof of Theorem~\ref{thm:rcfac} uses
$\Omega(r^2)$ colours. While a distance-$d$-coloured $r$-factor
requires $\Omega(r^2)$ colours for $d\geq 3$, a distance-2-coloured
$r$-factor only requires a number of colours linear in $r$. Therefore,
we omit the modification of the above proof and strengthen the result
for the case $d=2$; that is, we show that {\sc d$2$c-$r$-fac} is hard
even when the number of colours used by the input colouring $c$ is
linear in $r$.

The construction in the proof of the following theorem is similar to
that in the proof of Theorem~\ref{thm:rcfac}, but the role of the
cliques is played by subgraphs isomorphic to the {\em Kneser graph}
$KG(2r{-}1,r{-}1)$. Recall that $[n]$ denotes the set $\{1,\dots,n\}$
and $\binom{[n]}{k}$ denotes the set of subsets of $[n]$ of size $k$.
Then, $KG(n,k)$ is the graph on $\binom{[n]}{k}$ with an edge between
$A$ and $B$ if and only if $A\cap B = \emptyset$. Observe that, in
$KG(2r{-}1,r{-}1)$, if $A$ and $B$ are adjacent then $|A\cup
B|=2r{-}2$; that is, there is a unique element $c^*(AB)$ in
$[2r{-}1]\setminus(A \cup B)$. It is not difficult to check
that $c^*$ is a distance-2-colouring of $KG(2r{-}1,r{-}1)$, which we
call the {\em canonical} colouring; note that $KG(2r{-}1,r{-}1)$ is
$r$-regular. For example, when $r=3$, we have
$KG(5,2)$ which is isomorphic to the Petersen graph, as shown in
Figure~\ref{fig:pet} along with its canonical colouring.

\begin{thm}
	The {\sc distance-$d$-coloured $r$-factor} problem is $\NP$-complete
	for integers $d,r$ at least 2. Furthermore, the {\sc
	distance-2-coloured $r$-factor} problem is $\NP$-complete even when
	the input colouring uses at most $2r{-}1$ colours.
\end{thm}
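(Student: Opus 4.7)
The plan is to prove both assertions by polynomial-time reduction from \hc restricted to $(r{+}1)$-regular 3-uniform hypergraphs, which is $\NP$-hard by \cite{Kratochvil2003}. Membership in $\NP$ is immediate: for any fixed $d$, one can verify in polynomial time that a guessed edge set $F$ is an $r$-factor of $G$ and that the induced colouring is distance-$d$. It therefore suffices, in each case, to construct from $H$ an edge-coloured graph whose distance-$d$-coloured $r$-factors are in bijection with the $1$-in-$3$-colourings of $H$.

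For the first assertion in the case $d \geq 3$, I would reuse the graph $\GHc$ from Theorem~\ref{thm:rcfac} unchanged and show that a subset of edges induces a distance-$d$-coloured $r$-factor of $\GHc$ if and only if it induces an rc-$r$-factor. The forward direction is immediate, since any rc-$r$-factor is distance-$d$-coloured for every $d$. For the converse, the structural analysis of Theorem~\ref{thm:rcfac} (forced inclusion of the internal edges of each $Q_i^x$, and the binary choice between the central clique, $v_e \cup Q_i^x$, or $x_i \cup Q_i^x$) applies verbatim; each such component has line-graph diameter at most~$2$, so the distance-$d$ condition for $d \geq 3$ already forces all its edges to carry distinct colours---i.e., it is rainbow. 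The decoding of $\phi_F$ is then inherited verbatim.

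For the strengthened second assertion ($d = 2$ and at most $2r{-}1$ colours), I would build a modified reduction $\tilde{G}_H^{\tilde{c}}$ in which every clique appearing in $\GHc$ is replaced by a copy of $KG(2r{-}1, r{-}1)$ equipped with its canonical colouring $c^*$. Concretely, for each vertex $x$ of $H$ introduce $r{+}1$ disjoint Kneser blocks $Q_1^x, \dots, Q_{r+1}^x$ and a central Kneser block $C_x$, joined by bridging edges between designated attachment vertices of $C_x$ and of each $Q_i^x$ whose colours extend $c^*$ consistently. For each hyperedge $e = \{x, y, z\}$ of $H$, add a vertex $v_e$ with $c^*$-compatible bridges into the appropriate $Q_i^x$, $Q_j^y$, $Q_k^z$. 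Because $KG(2r{-}1, r{-}1)$ is $r$-regular and distance-2-coloured, taking all internal edges of any block saturates its vertices to degree~$r$ using only colours in $[2r{-}1]$, so that the resulting binary choice at $x$---``activate $C_x$'' versus ``activate each $v_e$ in place of $C_x$''---mirrors the 1-in-3 logic of Theorem~\ref{thm:rcfac}.

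The main obstacle will be to prove rigidity of this binary choice under the weaker distance-2 constraint: specifically, that every Kneser block of $\tilde{G}_H^{\tilde{c}}$ is either wholly included (together with exactly one of its two bridging configurations) or entirely absent from the factor, with no ``hybrid'' configurations possible. I expect this key lemma to follow from a colour-counting argument at a single block vertex $u$: under $c^*$, the $r$ colours in $[2r{-}1]$ available to $u$ are exactly exhausted by the $r$ internal edges at $u$, so any bridging edge at $u$ must either repeat one of those colours (violating properness) or force the removal of a specific internal edge. Propagating this constraint throughout the block via the distance-2 coupling between edges at line-graph distance~$2$ should leave only the two canonical configurations, after which decoding a $1$-in-$3$-colouring $\phi_F$ from a factor $F$ (and conversely building $F_\phi$) proceeds exactly as in Theorem~\ref{thm:rcfac}.
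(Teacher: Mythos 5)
Your overall strategy coincides with the paper's: membership in $\NP$ is routine, the case $d\geq 3$ is handled by reusing $\GHc$ from Theorem~\ref{thm:rcfac}, and the case $d=2$ with at most $2r{-}1$ colours is handled by a new gadget built from $KG(2r{-}1,r{-}1)$ with its canonical colouring. Your treatment of the $d\geq3$ part is at the same level of detail as the paper's own remark, with one logical caveat: you cannot ``apply the structural analysis of Theorem~\ref{thm:rcfac} verbatim'' and \emph{then} deduce rainbowness, because that analysis derives the clique decomposition \emph{from} the rainbow-component hypothesis. The correct order is to use the distance condition directly to force the decomposition into cliques, and only then observe that a clique all of whose edge-pairs are at distance at most $1$ (in the paper's convention, under which distance-$1$-coloured means properly edge-coloured) must be rainbow once $d\geq 2$.

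The $d=2$ construction as you describe it has two concrete problems, and the ``main obstacle'' you flag is a genuine gap rather than a routine verification. First, there is an arity mismatch: you keep the source hypergraph $(r{+}1)$-regular while making the central block a copy of $KG(2r{-}1,r{-}1)$, but that graph has $\binom{2r-1}{r-1}$ vertices, whereas you need exactly one port vertex of the central block per incidence of $x$, and the central block must be $r$-regular so that activating all of its edges saturates every port; an $r$-regular graph on $r{+}1$ vertices is $K_{r+1}$, which cannot be distance-$2$-coloured with $O(r)$ colours. The paper resolves this by reducing from $\binom{2r-1}{r-1}$-regular hypergraphs and letting the ports $x_1,\dots,x_\rho$ themselves induce the central gadget (each $x_i$ is a shared vertex of the central gadget and of $Q_i^x$; there are no separate bridging edges). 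Second, and more importantly, a single layer of blocks between the centre and $v_e$ is not rigid under the distance-$2$ condition: that condition only forbids monochromatic edges at distance at most $1$, and two monochromatic edges lying in blocks attached to \emph{different} hypergraph vertices but both reaching $v_e$ sit at distance $2$ through $v_e$, so they are legal; a colour count at a single block vertex therefore does not exclude mixed configurations at $v_e$. The paper interposes a second Kneser block $\bar{Q}_i^x$ and a switch vertex $\bar{x}_i$ of degree $2r$ whose edges towards $Q_i^x$ carry colours from $\{r,\dots,2r{-}1\}$ and whose edges towards $\bar{Q}_i^x$ carry colours from $\{1,\dots,r\}$; the forced internal edges of the two blocks then pin any pair of simultaneously used bridge edges at $\bar{x}_i$ to the single common colour $r$, an immediate contradiction. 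Without this second layer (or an equivalent device) the exclusive-or at each incidence, and hence the decoding of a $1$-in-$3$-colouring from a factor, does not follow.
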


	\begin{figure}[t]
		\begin{center}
\def\rs{1.4}
\def\rrs{3.0}
\begin{tikzpicture}

	\tikzstyle{every node} = [draw,circle,inner	sep=1.2]

	\node (x34) at (18:\rrs)
		{\textbf{\textcolor{myorange}{3}\textcolor{mygreen}{4}}};
	\node (x12) at (90:\rrs)
		{\textbf{\textcolor{black}{1}\textcolor{gray}{2}}};
	\node (x45) at (162:\rrs)
		{\textbf{\textcolor{mygreen}{4}\textcolor{myblue}{5}}};
	\node (x23) at (234:\rrs)
		{\textbf{\textcolor{gray}{2}\textcolor{myorange}{3}}};
	\node (x15) at (306:\rrs)
		{\textbf{\textcolor{black}{1}\textcolor{myblue}{5}}};
	\node (x25) at (18:\rs) 
		{\textbf{\textcolor{gray}{2}\textcolor{myblue}{5}}};
	\node (x35) at (90:\rs)
		{\textbf{\textcolor{myorange}{3}\textcolor{myblue}{5}}};
	\node (x13) at (162:\rs)
		{\textbf{\textcolor{black}{1}\textcolor{myorange}{3}}};
	\node (x14) at (234:\rs)
		{\textbf{\textcolor{black}{1}\textcolor{mygreen}{4}}};
	\node (x24) at (306:\rs)
		{\textbf{\textcolor{gray}{2}\textcolor{mygreen}{4}}};

	\foreach \from/\to in {23/45,24/35,25/34}
	\draw[ultra thick] (x\from) -- (x\to);

	\foreach \from/\to in {13/45,14/35,15/34}
	\draw[ultra thick,color=gray] (x\from) -- (x\to);

	\foreach \from/\to in {12/45,14/25,15/24}
	\draw[ultra thick,color=myorange] (x\from) -- (x\to);

	\foreach \from/\to in {12/35,13/25,15/23}
	\draw[ultra thick,color=mygreen] (x\from) -- (x\to);

	\foreach \from/\to in {12/34,13/24,14/23}
	\draw[ultra thick,color=myblue] (x\from) -- (x\to);
\end{tikzpicture}
			\caption{$KG(5,2)$ with its canonical colouring; the vertex
			$\{x,y\}$ is labelled $xy$.}
			\label{fig:pet}
		\end{center}
	\end{figure}
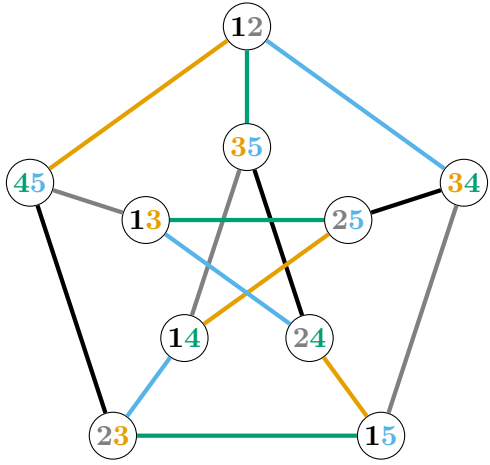

\begin{proof}
	As we discussed above, we need only deal with the second part of the
	statement of the theorem. Furthermore, it is easy to see that the
	problem is in $\NP$ as in the previous theorem. 

	Let $r\geq 2$,
	$\rho=\binom{2r-1}{r-1}$, and let $H$ be a $\rho$-regular, 3-uniform
	hypergraph. We construct a graph $\bar{G}_H$ and a colouring
	$c$ of its edges using at most $2r{-}1$ colours, such that $H$ has a
	1-in-3-colouring if and only if $\bar{G}_H^c$ has a
	distance-2-coloured $r$-factor. 

	We now describe the construction of $\bar{G}_H$. For each vertex $x$
	in $V(H)$ and for $1 \leq i \leq \rho$, we add a set of vertices
	$Q_i^x = \{x_i^X \mid X \in \binom{[2r-1]}{r-1}\}$ and a set of
	vertices $\bar{Q}_i^x = \{\bar{x}_i^X \mid X \in
	\binom{[2r-1]}{r-1}\}$. We add edges so that the respective
	subgraphs induced by $Q_i^x$ and $\bar{Q}_i^x$ are isomorphic to
	$KG(2r{-}1,r{-}1)$; that is, we add an edge $x_i^X x_i^Y$ (and
	$\bar{x}_i^X \bar{x}_i^Y$) if and only if $X\cap Y = \emptyset$. To
	simplify our notation, we will relabel two important vertices. We
	denote $x_i^{\{1,\dots,r-1\}}$ by $x_i$ and
	$\bar{x}_i^{\{r+1,\dots,2r-1\}}$ by $\bar{x}_i$. We add edges
	between the vertices $\{x_1,\dots,x_\rho\}$ so that they induce a
	subgraph isomorphic to $KG(2r{-}1,r{-}1)$; this is done in an
	arbitrary way, and we call the subgraph the {\em central gadget} of
	$x$. We add an edge from $\bar{x}_i$ to each neighbour of $x_i$ in
	$Q_i^x$; that is, we add the edge $\bar{x}_i x_i^X$ if and only if
	$\{1,\dots,r-1\}\cap X = \emptyset$. In this way, $(Q_i^x \setminus
	x_i) \cup \bar{x_i}$ induces a subgraph isomorphic to
	$KG(2r{-}1,r{-}1)$. Finally, we add a vertex $v_e$ for each edge $e$
	of $H$. When $e$ is the $i$th edge of some vertex $x$, we add edges
	from $v_e$ to each neighbour of $\bar{x}_i$ in $\bar{Q}_i^x$; that
	is, we add the edge $v_e \bar{x}_i^X$ if and only if
	$\{r+1,\dots,2r{-}1\}\cap X = \emptyset$. Thus, $(\bar{Q}_i^x
	\setminus \bar{x}_i)\cup v_e$ also induces a subgraph
	isomorphic to $KG(2r{-}1,r{-}1)$. In Figure~\ref{fig:dist}, we have
	depicted part of the constructed graph $\bar{G}_H$ in the case
	$r=3$.

	We now describe the colouring $c$ of the edges of $\bar{G}_H$. The
	central gadget of each vertex is given the canonical colouring
	of $KG(2r{-}1,r{-}1)$. The subgraphs induced by $Q_i^x$ and
	$\bar{Q}_i^x$ are also given the canonical colouring; to
	clarify, we set $c(x_i^X x_i^Y)$ (and $c(\bar{x}_i^X
	\bar{x}_i^Y)$) to be the unique element of $[2r{-}1]\setminus (X
	\cap Y)$. We set
	$c(\bar{x}_i x_i^X)=c(x_i x_i^X)$ and $c(v_e \bar{x}_i^X) =
	c(\bar{x}_i \bar{x}_i^X)$. This completes the description of
	$\bar{G}_H^c$.

	\begin{figure}[t]
		\begin{center}
			\def\r{0.5}
\def\rr{1.2}
\begin{tikzpicture}

				\tikzstyle{every node} = [draw,circle,fill=black,inner sep=1.2]

				\node[label=45:$v_e$] (ve) at (4.5,\rr+0.2) {};
				\begin{scope}[shift={(4.5,\rr+0.2)}]

								\draw[thick] (ve) -- (115:0.37);
								\draw[thick] (ve) -- (135:0.37);
								\draw[thick] (ve) -- (155:0.37);

								\draw[thick] (ve) -- (-25:0.37);
								\draw[thick] (ve) -- (-45:0.37);
								\draw[thick] (ve) -- (-65:0.37);
				\end{scope}

				\node (x34) at (18:\rr) {};
				\node (x45) at (162:\rr) {};
				\node[label={[label distance=12]-25:$Q_i^x$}] (x23) at (234:\rr) {};
				\node (x15) at (306:\rr) {};
				\node (x25) at (18:\r) {};
				\node (x35) at (90:\r) {};
				\node (x13) at (162:\r) {};
				\node (x14) at (234:\r) {};
				\node (x24) at (306:\r) {};

				\begin{scope}[shift={(3,0)}]
				\node (bx34) at (18:\rr) {};
				\node (bx45) at (162:\rr) {};
				\node[label={[label distance=11]-22:$\bar{Q}_i^x$}] 
								(bx23) at (234:\rr) {};
				\node (bx15) at (306:\rr) {};
				\node (bx25) at (18:\r) {};
				\node (bx35) at (90:\r) {};
				\node (bx13) at (162:\r) {};
				\node (bx14) at (234:\r) {};
				\node (bx24) at (306:\r) {};
				\end{scope}
				\node[label=90:$\bar{x}_i$] (bxi) at (1.5,\rr+0.2) {};

				\begin{scope}[shift={(-1-\rr,\rr+\rr)}]

				\node (cx34) at (18:\rr) {};
				\node (cxi)  at (90:\rr) {};
				\node (cx45) at (162:\rr) {};
				\node (cx23) at (234:\rr) {};
				\node (cx15) at (306:\rr) {};
				\node[label=-135:$x_i$]
							(xi)	 at (306:\rr) {};
				\node (cx25) at (18:\r) {};
				\node (cx35) at (90:\r) {};
				\node (cx13) at (162:\r) {};
				\node (cx14) at (234:\r) {};
				\node (cx24) at (306:\r) {};

				\draw[thick] (cx34) -- (18:\rr+0.37);
				\draw[thick] (cx34) -- (23:\rr+0.35);
				\draw[thick] (cx34) -- (13:\rr+0.35);

				\draw[thick] (cxi) -- (85:\rr+0.35);
				\draw[thick] (cxi) -- (90:\rr+0.37);
				\draw[thick] (cxi) -- (95:\rr+0.35);

				\draw[thick] (cx45) -- (157:\rr+0.35);
				\draw[thick] (cx45) -- (162:\rr+0.37);
				\draw[thick] (cx45) -- (167:\rr+0.35);

				\draw[thick] (cx23) -- (229:\rr+0.35);
				\draw[thick] (cx23) -- (234:\rr+0.37);
				\draw[thick] (cx23) -- (239:\rr+0.35);

				\draw[thick] (cx25) -- (18:\rr-0.95);
				\draw[thick] (cx25) -- (10:\rr-0.5);
				\draw[thick] (cx25) -- (26:\rr-0.5);

				\draw[thick] (cx35) -- (90:\rr-0.95);
				\draw[thick] (cx35) -- (82:\rr-0.5);
				\draw[thick] (cx35) -- (98:\rr-0.5);

				\draw[thick] (cx13) -- (162:\rr-0.95);
				\draw[thick] (cx13) -- (154:\rr-0.5);
				\draw[thick] (cx13) -- (170:\rr-0.5);
				
				\draw[thick] (cx14) -- (234:\rr-0.95);
				\draw[thick] (cx14) -- (226:\rr-0.5);
				\draw[thick] (cx14) -- (242:\rr-0.5);

				\draw[thick] (cx24) -- (306:\rr-0.95);
				\draw[thick] (cx24) -- (292:\rr-0.5);
				\draw[thick] (cx24) -- (314:\rr-0.5);
				\end{scope}
				
				\foreach \from/\to in
				{i/45,i/35,i/34,34/25,34/15,15/24,15/23,23/14,23/45,45/13,35/14,35/24,25/13,25/14,24/13}{
				\draw[thick] (x\from) -- (x\to);
				\draw[thick] (bx\from) -- (bx\to);
				\draw[thick] (cx\from) -- (cx\to);}

				\draw[thick] (bxi) -- (x45);
				\draw[thick] (bxi) -- (x35);
				\draw[thick] (bxi) -- (x34);

				\draw[thick] (ve) -- (bx45);
				\draw[thick] (ve) -- (bx35);
				\draw[thick] (ve) -- (bx34);

\end{tikzpicture}
			\caption{A subgraph of $\bar{G}_H$ for some
				hypergraph $H$, showing how the central clique
			of $x$ is connected to $v_e$, when $r=3$.}
			\label{fig:dist}
		\end{center}
	\end{figure}
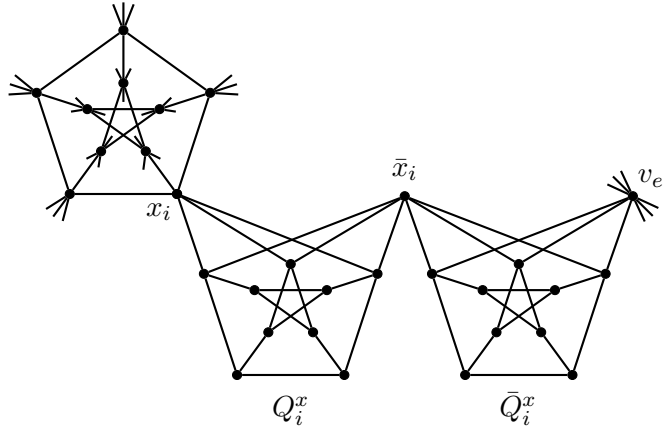

	Suppose that $H$ has a 1-in-3-colouring $\phi$. We construct a
	distance-2-coloured $r$-factor $F_\phi$ for $\bGHc$. Let $x$ be a
	vertex in $V(H)$ that appears in the edges $e_1,\dots,e_\rho$.
	Then, if $\phi(x)=1$, we include in $F_\phi$ the edges of the
	central gadget of $x$, the edges in the subgraph induced by
	$(Q_i^x \setminus x_i)\cup \bar{x}_i$ and the edges in
	the subgraph induced by $(\bar{Q}_i^x \setminus
	\bar{x}_i)\cup v_{e_i}$, for $1 \leq i \leq \rho$. If
	$\phi(x)=-1$, we include in $F_\phi$ the edges in $Q_i^x$ and
	$\bar{Q}_i^x$, for $1\leq i \leq \rho$. Every component of the
	subgraph induced by $F_\phi$ is distance-2-coloured; verifying that $v_e$
	is covered by $r$ edges of $F_\phi$ proceeds as in the
	proof of Theorem~\ref{thm:rcfac}.

	Now suppose that $\bGHc$ has a distance-2-coloured $r$-factor $F$.
	We will show that $H$ has a 1-in-3-colouring $\phi_F$. Observe that,
	for each $x$ in $V(H)$ and for $1 \leq i \leq \rho$, every edge in
	the subgraphs induced by $Q_i^x\setminus x_i$ and
	$\bar{Q}_i^x\setminus\bar{x}_i$ must be in $F$. We claim that there
	cannot be a vertex $u$ in $Q_i^x \setminus x_i$ and a vertex $u'$ in
	$\bar{Q}_i^x \setminus \bar{x}_i$ such that both $u\bar{x}_i$ and
	$u'\bar{x}_i$ are in $F$. From this it follows that for each $x$ in
	$V(H)$, either the central gadget of $x$ is a component of $F$ or
	none of its edges are in $F$ and the rest of the proof is identical
	to that of Theorem~\ref{thm:rcfac}. Suppose the claim is false, and
	observe that $u\bar{x}_i$ has a colour in $\{r,\dots,2r{-}1\}$ and
	$u'\bar{x}_i$ has a colour in $\{1,\dots,r\}$. The edges containing
	$u$ in $Q_i^x\setminus x_i$ have colours $1,\dots,r{-}1$; thus,
	$u'\bar{x}_i$ has colour $r$ otherwise $F$ is not
	distance-2-coloured. The edges containing $u'$ in
	$\bar{Q}_i^x\setminus\bar{x}_i$ have colours $r{+}1,\dots,2r{-}1$;
	thus, $u\bar{x}_i$ has colour $r$ as well, which is a contradiction.
	\hfill\qed

\end{proof}

\section*{Acknowledgments}

This work was supported
by project GA20-09525S of the Czech Science Foundation (to R.~\v{C}ada),
by JSPS Grant-in-Aid for Scientific Research (C) JP23K03204 (to M.~Furuya),
by JSPS Grant-in-Aid for Scientific Research (C) JP23K03195 (to K.~Ozeki),
and by JSPS Grant-in-Aid for Early-Career Scientists JP20K14353 (to T.~Yashima).

\bibliographystyle{plain}
\bibliography{pcf-new}

\end{document}